\def \d {\delta}
\def \k {\kappa}
\def \e {\varepsilon}
\def \l {\lambda}
\def \s {\sigma}
\def \L {\Lambda}
\def \bu {{\bf u}}
\def \bx {{\boldsymbol x}}
\def \by {{\boldsymbol y}}
\def \cW {\mathcal{W}}
\newcommand{\R}{\ensuremath{\mathbb{R}}}   
\def \ds  {\, \mbox{d}s}
\def \ddt  {\frac{\mbox{d\,\,}}{\mbox{d}t}}
  \newtheorem{theorem}{Theorem}[section]
  \newtheorem{lemma}[theorem]{Lemma}
  \newtheorem{definition}[theorem]{Definition}
\title{Unique Nash equilibrium of a nonlinear model of opinion dynamics on networks with friction-inspired stubbornness}
\author{\name David N. Reynolds \email david.reynolds@gssi.it \\
      \addr School of Mathematics,\\
      Gran Sasso Science Institute \\
      67100 L'Aquila, Italy
      \AND
      \name Francesco Tudisco \email f.tudisco@ed.ac.uk \\
      \addr School of Mathematics and Maxwell Institute for Mathematical Sciences\\
      University of Edinburgh, EH93FD Edinburgh, UK\\ Gran Sasso Science Institute, 67100 L'Aquila, Italy}
\begin{document}

\maketitle

\begin{abstract}
  The modeling of opinion dynamics has seen much study in varying academic disciplines. Understanding the complex ways information can be disseminated is a complicated problem for mathematicians as well as social scientists. We present a nonlinear model of opinion dynamics that utilizes an environmental averaging protocol similar to the DeGroot and Freidkin-Johnsen models. Indeed, the way opinions evolve is complex and nonlinear effects ought to be considered when modelling. For this model, the nonlinearity destroys the translation invariance of the equations, as well as the convexity of the associated payout functions. The standard theory for well-posedness and convergence no longer applies and we must utilize the Brouwer topological degree and nonconvex analysis in order to achieve these results. Numerical simulations of the model reveal that the nonlinearity behaves similarly to the well-known Friedkin-Johnsen for so-called ``reasonable'' opinions, but better models the way agents that hold ``extreme'' opinions are more stubborn than their reasonable counterparts.
\end{abstract}

\paragraph{Keywords}
  Graphs, networks, continuous-valued opinion dynamics, consensus, compromise, stubbornness

\paragraph{AMS subject classification}
  91D30, 
  05C57, 
  05C50, 
  34A34, 
  34D05 

\thispagestyle{plain}
\section{Introduction}

Humanity constantly exchanges information amongst itself; from watching the news and reading articles online, to speaking with acquaintances or simply overhearing a conversation at a bar. The means by which information is transmitted are innumerable. From all this input and our inherent predispositions, we are constantly forming and adjusting our opinions about any number of topics. With this comes a desire to understand and model the way opinions change. In recent years there has been an increase in interest in this exact topic in many academic areas, but in particular, there is much mathematics being done to rigorously describe and predict how opinions develop within a group of agents.

Networks represented as undirected graphs $\mathcal G = (\mathcal V,\mathcal E)$ with $N$ nodes $\{1,\dots,N\}$ and undirected edges $\mathcal E\subseteq \mathcal V\times \mathcal V$ are one of the most successful models for opinion dynamics. A wealth of real-world social and natural interaction scenarios are very well represented via a network where each agent is represented by a vertex $i\in \mathcal V$ and two agents $i,j\in \mathcal V$ exchanging information directly are shown via the edge $ij\in \mathcal E$ that connects them \cite{friedkin2011social,newman2018networks}. Within this framework, many models have been created to describe certain phenomena like consensus, disagreement, spread of diseases, and even biological models of flocking dynamics \cite{CS2007a,CS2007b,DeGroot,FJ,Heg,JMFB,LRS}.  
In the context of opinion dynamics, the majority of the models  build on a wealth of empirical evidence 
which shows that individuals update their opinions as  combinations of their own and others' 
opinions, based on the strength of the reciprocal interpersonal ties \cite{ALT,DeGroot,French,Harary2, Harary1}. This is represented in the celebrated DeGroot model by means of a linear dynamical system where the opinion $x_i$ of node $i$ evolves as a function of its neighboring nodes in the graph: $x_i(k+1)=\sum_{j: ij\in \mathcal E}x_j(k)$. For any aperiodic graph, this model always reaches consensus, in that all the agents (the nodes) will eventually share the same opinion. In fact, it is straightforward to show that in that case a unique limit $\lim_{k\to \infty}x_i(k)=x_i^*$ exists and $x_i^*=x_j^*$ for all $i,j$. The Friedkin-Johnsen model further adds a linear term to model stubbornness, i.e.\ the propensity (or resiliency) of each individual to change their own initial convictions. With this addition, consensus may not happen, however, due to the linearity (and thus the translation invariance) of this model, the opinions of the network tend to become ``similar'' even when the initial convictions and levels of stubbornness are very different. In order to better model opinion dynamics it is key to be able to incorporate nonlinear effects into the model. 

In \cite{XYLCS} the authors treat a nonlinear version of the Degroot model taking into account the psychological phenomena of confirmation bias.  Further, \cite{YCC} investigates incorporating a nonlinear force into the continuous-time Degroot model. In both \cite{XYLCS}, and \cite{YCC}, the asymptotic state is, in general, consensus.  We introduce a nonlinear variation of the Friedkin-Johnsen model, which better models the desire of the agents to maintain their own convictions rather than moving towards a global consensus, i.e. nonlinear stubbornness. The particular nonlinear term is inspired by the Rayleigh's self-propulsion and friction force. Indeed, this forcing term has seen study in the context of flocking and swarming models of collective dynamics \cite{LRS}. Understanding the way this individual forcing term interacts with the consensus operator is also directly connected to the behaviour of synchronous systems like the Stuart-Landau oscillators \cite{PLA}. Our formulation in terms of a differential opinion game allows us to prove the existence of a unique Nash equilibrium to which the system converges along a gradient flow, \cite{Li-Xue-Yu,Simon}. Moreover, it can be applied to previous linear models as well and allows us to revisit their convergence analysis in terms of Nash equilibria.

The rest of the paper is structured as follows. First, in \Cref{Pre} we introduce our notation and preliminary considerations. Then, the remainder of this section is devoted to overviewing relevant background, in particular the Friedkin-Johnsen model, and introducing the nonlinear generalization we consider in this work.   \Cref{warmup} is a standard analysis of the continuous DeGroot model as a warm-up. \Cref{MR} will be dedicated to proving the main result of the paper, \Cref{T:main}. In \Cref{s:CFJ} we will apply the technique developed in the preceding section to the continuous version of the Friedkin-Johnsen model. Finally, in  \Cref{sec:experiments} we will present a few numerical examples to show the different behavior the nonlinear opinion dynamical term introduces.

\subsection{Preliminaries}\label{Pre}
In this paper, each model will represent how the opinions of a group of agents, on a particular topic, evolve with time. Therefore we will be considering opinion values $x_i(k),x_i(t) \in \R_+$ where $k$ denotes discrete time steps,  $t$ the continuous time evolution, and $i$ the specific agent. For interpretation as an opinion model we consider small values close to $0$ representing disagreement about a particular topic, and as values grow larger the more the agent express agreement, where for large values $x_i\gg 1$, the held opinion would be considered ``extreme''. Similarly, each agent will be allowed a desired or initial opinion value $u_i$, which from now on will be referred to as \textit{convictions} and will also reside in the positive reals $u_i \in \R_+$. The graph $\mathcal{G}$ of the network will be connected and undirected unless otherwise stated, with node set $\{1,\dots,N\}$ representing the group of agents being modeled. Finally, while the models discussed in the  introduction will be based on the normalized adjacency matrix $A$  as defined in \eqref{eq:normalized_adj_mx}, this choice of linear mapping is not a necessity for the main analysis and in later sections we will broaden the scope to  any symmetric entrywise nonnegative $N\times N$ matrix $M$. We will specify $M$ in specific application contexts.

\subsection{DeGroot and Friedkin-Johnsen models}
Opinions can be tracked as  binary values (agreement or disagreement); as in the famous Voter Model of Holley and Liggett \cite{HL} or individual words as in the Naming Game seen in \cite{MPH, XSKZLS}; or they can be continuous values where a larger value could correspond with agreement and lower with disagreement about a particular topic, \cite{Biz,BCP, DeGroot, French, FJ, Heg, LRS}. The method by which the dynamics themselves occur can be described through Bayesian \cite{MPH} or Non-Bayesian processes \cite{DeGroot, FJ}. Further, there are Bounded Confidence models that take into account with which other agents interactions can occur (e.g.\ only those with similar opinions \cite{Def, Heg}), and models that incorporate the stubbornness of an agent, leading to disagreement, or compromises instead of consensus \cite{FJ, LRS, PTCF, XZGZZ}. See \cite{PT1, PT2} for an overview of the various mathematical models that have been developed in the field of opinion dynamics.

The early work of French and Harary \cite{French, Harary2} on reaching consensus via communication on a graph, set the stage for what is now known as DeGroot learning \cite{DeGroot}; a Non-Bayesian process by which a consensus is achieved via updating beliefs at each time step, based off of interactions with others in the network. Indeed, the standard model for achieving consensus is for each agent $i$ in a network to update its current beliefs $x_i(k)$ at time step $k$ via an environmental averaging over the beliefs of the agents with which it shares a connection:
\begin{equation}\label{eq:EA}
x_i(k+1)=\frac{1}{\deg(i)}\sum_{j: ij\in \mathcal E}x_j(k)= \sum_{j=1}^NA_{ij}x_j(k),
\end{equation}
where $\deg(i) = |\{j:ij\in \mathcal E\}|$ is the degree of node $i$, and $A_{ij}$ are the elements of the row-stochastic ``normalized'' adjacency matrix
\begin{equation}\label{eq:normalized_adj_mx}
    A_{ij}=\begin{cases}
\deg(i)^{-1} & i\sim j\\
0 & \text{otherwise}
\end{cases} \, .
\end{equation}
 
Indeed, this environmental averaging protocol is also seen in biological flocking dynamics \cite{CS2007a, CS2007b, LRS, Rbook}. This protocol can be derived from each agent attempting to minimize the payout function
\begin{equation}\label{eq:pEA}
    p_i(\bx)=\frac{1}{2}\sum_{j=1}^NA_{ij}(x_i-x_j)^2.
\end{equation}
 It is well known that, so long as the adjacency matrix is not periodic, this model produces convergence of opinions to a consensus value which is a convex combination of the initial opinion values.

The Friedkin-Johnsen model \cite{FJ, GS, PTCF} uses this mechanic to spread information between agents, but further includes a stubbornness factor $\lambda_i$ as a means of ``anchoring'' opinions to a particular fixed value,

\begin{equation}\label{eq:FJ}
    x_i(k+1)=\l_i\sum_{j=1}^N A_{ij}x_j(k)+(1-\l_i)u_i, \ \ \l_i \in [0,1].
\end{equation}
In this way the opinions $x_i$ are torn between the desired consensus value and its own fixed preferred value $u_i$, often taken to be the initial opinion value $x_i(0)$. Each step is taken from a weighted average of these two values that can similarly be chosen via minimizing the payout function

\begin{equation}\label{eq:pFJ}
    p_i(\bx)=\frac{\l_i}{2}\sum_{j=1}^NA_{ij}(x_i-x_j)^2+\frac{1-\l_i}{2}(x_i-u_i)^2.
\end{equation}

If all $\l_i=1$, then none of the agents are stubborn and we return to consensus as in \eqref{eq:EA}. Alternatively, if $\l_i=0$ for all $i$, then that agent is fully stubborn and its opinion will be exactly $x_i=u_i$, regardless of any other opinion values. Finally, for an undirected connected graph $\mathcal G$ with at least one $0<\l_i<1$, it is well known that the Perron-Frobenius theory entails this model has a unique equilibrium vector $\bx^*$ to which the opinion values will eventually converge. In general, this equilibrium will not represent consensus, but rather what we call \textit{a compromise}. Note that, due to the linearity of the system, the stability of the fixed point can be immediately seen as the linearization of the system is represented by a stable diagonal shift of the graph Laplacian.

\subsection{A nonlinear generalization of the FJ model with friction inspired stubbornness}

Note that we can rewrite the FJ protocol \eqref{eq:FJ} by letting $y_i=\frac{x_i}{\l_i}$ and $\sigma_i=\frac{1-\l_i}{\l_i}$ as 
\begin{equation}\label{eq:FJs}
    y_i(k+1)=\sum_{j=1}^NB_{ij}y_j(k)+\s_iu_i, \ \ \s_i \in [0,\infty],
\end{equation}
where $B_{ij}=\l_jA_{ij}$ is a substochastic matrix. This form leads nicely to the protocol we introduce here. 

The new protocol we introduce is similar to the FJ model, except it introduces a nonlinearity into the stubbornness term which substantially complicates the analysis of the model. This protocol has been seen in the Cucker-Smale system of flocking dynamics, with a Rayleigh friction and self-propulsion forcing term driving the dynamics \cite{LRS,NR}.  However,  while in the flocking case, the Cucker-Smale protocol is a time-dependent all-to-all coupling, in our settings all couplings are over a fixed network that allows for more complicated topologies. Interpreting  stubbornness within opinion dynamics as a form of friction yields the following update protocol with a nonlinear stubbornness forcing term:

\begin{equation}\label{eq:NFJ}
    x_i(k+1)=\sum_{j=1}^N M_{ij}x_j(k)+\s_i(u_i-x_i^p(k))x_i(k),
\end{equation}
where $M$ is any entrywise nonnegative graph-related matrix, $p> 0$ is a parameter responsible for the ``strength'' of the nonlinearity and we use $\sigma_i \geq 0$ to represent the level of an agent's desire to believe $u_i^{1/p}$. For $M=B$, the way the update protocol works is similar to the FJ model, but rather than averaging between the current average opinion and a fixed value, the agent observes the current average belief of those it is connected to, $\sum_{j=1}^NM_{ij}x_j$, as well as how far away their current belief, $x_i(k)$, is from their desired belief $u_i^{1/p}$. If the agent's current belief is close to its desired belief, or the strength measured by  $\sigma_i$ is small, then the update will be closer to the average belief. However, if the current belief is far from the desired belief, it will move away from the average, towards the desired belief. In this way, the model balances the agents' desire to agree with the average value and its desire to believe the specific value $u_i^{1/p}$ in a nonlinear fashion. In other words, \eqref{eq:NFJ} models agents that are willing to compromise, but the extent to which is determined by the parameters, and strength of nonlinearity.

Similar to the FJ model, the nonlinear protocol in \eqref{eq:NFJ} also has an associated payout function:

\begin{equation}\label{eq:pNFJ}
    p_i(\bx)=\frac{1}{2}\sum_{j=1}^N M_{ij}(x_i-x_j)^2+\sigma_i\left(\frac{1}{p+2}x_i^{p+2}-\frac{1}{2}u_ix_i^2\right),
\end{equation}
where the update protocol \eqref{eq:NFJ} corresponds with a stationary point of this payout function. Note that, the lack of convexity in \eqref{eq:pNFJ} means that we do not know a-priori that the system has a unique minimizer for \eqref{eq:pNFJ}. Therefore, \eqref{eq:NFJ} is not the only update protocol one can associate with that payout function. Alternative choices include constant multiples of \eqref{eq:NFJ} as well as the following protocol
\begin{align}\label{eq:bNFJ}
    x_i(k+1)=\left(\sum_{j=1}^N M_{ij}x_j(k)\right)\left(1-\sigma_i(u_i-x_i^p(k))\right)^{-1}.
\end{align}

Heuristically, \eqref{eq:bNFJ}
would achieve the same goal of balancing between the average value and the desired value $u_i^{1/p}$, but it raises worries if $(1-\sigma_i(u_i-x_i^p(k)))=0.$ For this reason, \eqref{eq:NFJ} will be the nonlinear update protocol, associated with the payout function \eqref{eq:pNFJ}, to which we will refer.

In this framework, the dynamics are considered to take place at discrete time steps, and indeed for practical applications and numerical computations this is an ideal framework. However, if we allow for opinions to evolve continuously in time, we can re-frame each of these models as a system of Ordinary Differential Equations (ODEs). To this end, one possibility is to subtract $x_i(k)$ on both sides of the discrete-time models and formally replace the discrete derivative $x_i(k+1)-x_i(k)$ with $\frac {\mathrm d}{\mathrm d t} x_i(t) = \dot x_i(t)$, where we now use the symbol $t$ to emphasize that we let $t$ vary continuously. In this way, we observe that the continuous-time equivalent of the Degroot model \eqref{eq:EA} corresponds to the system of ODEs for environmental averaging: 
\begin{equation}\label{eq:EAc}
    \dot{x}_i(t)=\sum_{j=1}^NA_{ij}(x_j(t)-x_i(t)),
\end{equation}
with identical payout function \eqref{eq:pEA}. Indeed, the continuous analogue has its own name as the Abelson Model \cite{PT1,PT2}.  Similarly, for \eqref{eq:FJ} we arrive at:
\begin{equation}\label{eq:FJc}
    \dot{x}_i(t)=\l_i\sum_{j=1}^NA_{ij}(x_j(t)-x_i(t))+(1-\l_i)(u_i-x_i(t)),
\end{equation}
obtaining the differential analog of the FJ model with the same corresponding payout function \eqref{eq:pFJ}. This model is known as the Taylor Model \cite{PT1,PT2}. Last, the continuous version of the nonlinear model defined in \eqref{eq:NFJ} is given by
\begin{equation}\label{eq:NFJc}
    \dot{x}_i(t)=\sum_{j=1}^N M_{ij}(x_j(t)-x_i(t))+\s_i(u_i-x_i(t)^p)x_i(t).
\end{equation}

We note that in this continuous setting, the payout--protocol correspondence may change. In fact, while this model corresponds the same payout function \eqref{eq:pNFJ}, using a different protocol here, as for instance \eqref{eq:bNFJ}, would lead to a different differential analogue, and would have a different corresponding payout function. This is one more reason why we prefer \eqref{eq:NFJ} over e.g. \eqref{eq:bNFJ}, as we consider the `correct' continuous analogue of each system to be the one that seeks stationary points to the same payout function. In this way,  each of these systems can now be viewed as a non-cooperative dynamical opinion game, where agents continuously update their opinions to minimize the corresponding payout function.

Throughout the rest of the paper, we will analyze these models in the framework of differential opinion games, proving that each of them has a Nash equilibrium to which the system converges. In the case of \eqref{eq:EAc}, the equilibrium will be the consensus of opinions, but the diagonal forcing included in \eqref{eq:FJc} and \eqref{eq:NFJc} makes consensus impossible, leading to the equilibrium opinion vector being representative of a best-case compromise.

\subsection{Game theoretic set-up and statement of main results} We will consider each of \eqref{eq:EAc}, \eqref{eq:FJc}, and \eqref{eq:NFJc} as a noncooperative dynamical opinion game, where the goal of each agent is to minimize their respective payout functions \eqref{eq:pEA}, \eqref{eq:pFJ}, and \eqref{eq:pNFJ}. In this way we will show that the dynamics of each of these systems lead to a steady state which is an exponentially stable Nash equilibrium. For the Abelson and Taylor models these results are already known, although they are not usually framed in this context, while for the nonlinear model presented here we must employ different techniques due to the instability of the diagonal forcing term.

\begin{definition}[Nash Equilibrium]\label{Nas} 
An $N$-tuple of strategies, $\bx=(x_1,...,x_N)$, is a Nash Equilibria if and only if for each $i$, the payoff function
\begin{align}
p_i(\bx)=\max_{r_i}p_i(x_1,x_2,...,r_i,...,x_N),
\end{align}
i.e., no player can unilaterally increase their payout given the strategy of each other agent is fixed.
\end{definition}

For background on Nash equilibria see \cite{Nash}. The main result that is to be shown in this paper is the following.

\begin{theorem}\label{T:main}
For any set of parameters $(\bu,M,\boldsymbol{\s})$, such that $u_i \in \R_+$, $M_{ij}=M_{ji}\geq 0$, and $\s_i\geq 0$, there exists a unique solution $\bx^*\in \R^N_+$ to \eqref{steady} which is a locally exponentially stable equilibrium for the system \eqref{model}. The map $(\bu,M,\boldsymbol{\s})\mapsto \bx^*$ 
is infinitely smooth. Moreover, any solution $\bx(t)\in \R^N_+$ converges to the unique Nash equilibrium.
\end{theorem}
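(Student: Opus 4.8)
The plan is to exploit the symmetry of $M$ to recast \eqref{model} as a genuine gradient flow. Since $M=M^{\top}$, the payout functions \eqref{eq:pNFJ} are all partial derivatives of the single \emph{exact potential}
\[
P(\bx)=\tfrac12\,\bx^{\top}L\,\bx+\sum_{i=1}^{N}\sigma_i\Big(\tfrac{1}{p+2}x_i^{p+2}-\tfrac12 u_i x_i^{2}\Big),\qquad L:=\diag(M\mathbf{1})-M\succeq 0,
\]
that is, $\partial P/\partial x_i=\partial p_i/\partial x_i$ for each $i$; hence \eqref{steady} is precisely $\nabla P(\bx)=0$, \eqref{model} is $\dot\bx=-\nabla P(\bx)$, and the critical points of $P$, the equilibria of \eqref{model} and the Nash equilibria of the game all coincide. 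I would first record an a priori bound: evaluating \eqref{steady} at the index where $x_i$ is largest and using $(L\bx)_i\ge 0$ there forces $x_i^{p}\le u_i$ at that index (propagating the maximum through the connected graph handles nodes with $\sigma_i=0$), so every nonnegative equilibrium lies in the box $[0,R]^{N}$ with $R=(\max_i u_i)^{1/p}$, and $P$ is coercive on $\R^{N}_{\ge 0}$. This confines the whole analysis to a compact set.

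For existence I would compute the Brouwer degree of $\nabla P$ on a region slightly larger than the box, punctured to exclude the spurious equilibrium $\mathbf{0}$ (note $\nabla P(\mathbf{0})=0$ always): on the outer faces $\partial P/\partial x_i>0$ and on the inner faces $\partial P/\partial x_i<0$, so a straight-line homotopy to a translation $\bx\mapsto\bx-\bx_0$ has no zero on the boundary and the degree is $1$, which yields a solution $\bx^{*}\in\R^{N}_{+}$. (Equivalently, coercivity gives $P$ a minimiser on $\R^{N}_{\ge 0}$, and since $\nabla P$ points strictly inward along $\partial\R^{N}_{+}$ while $P$ takes values below $P(\mathbf{0})=0$, the minimiser is interior and hence a critical point.)

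The crux is uniqueness, and it is here that the genuine obstacle lies, because $P$ is \emph{not} convex and the argument must be local. The key lemma is that $\mathrm{Hess}\,P(\bx)\succ 0$ at every critical point $\bx\in\R^{N}_{+}$. One uses \eqref{steady} to substitute $\sigma_i(u_i-x_i^{p})=(L\bx)_i/x_i$ into the diagonal of $\mathrm{Hess}\,P=L+\diag\!\big(\sigma_i((p+1)x_i^{p}-u_i)\big)$; symmetrising the off-diagonal terms and completing a square then collapses the quadratic form to
\[
\bv^{\top}\mathrm{Hess}\,P(\bx)\,\bv=\tfrac12\sum_{i,j}M_{ij}\Big(v_i\sqrt{\tfrac{x_j}{x_i}}-v_j\sqrt{\tfrac{x_i}{x_j}}\Big)^{2}+p\sum_{i}\sigma_i x_i^{p}v_i^{2}\ \ge\ 0,
\]
which vanishes only if $v_i/x_i$ is constant along edges (so $\bv\parallel\bx$ by connectedness) and $v_i=0$ wherever $\sigma_i>0$; with at least one $\sigma_i>0$ this forces $\bv=0$. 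Consequently every zero of $\nabla P$ in the region has local Brouwer index $\sign\det\mathrm{Hess}\,P=+1$, and as these indices sum to the total degree $1$ there is exactly one, namely $\bx^{*}$ (equivalently, two distinct local minima of the coercive $P$ would force a non-minimum critical point between them, a contradiction). The same nondegeneracy gives the remaining claims: $\partial_{\bx}\nabla P(\bx^{*};\bu,M,\boldsymbol{\s})=\mathrm{Hess}\,P(\bx^{*})$ is invertible, so the implicit function theorem makes $(\bu,M,\boldsymbol{\s})\mapsto\bx^{*}$ of class $C^{\infty}$, while the linearisation of \eqref{model} at $\bx^{*}$ is $-\mathrm{Hess}\,P(\bx^{*})\prec 0$, giving local exponential stability.

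For global convergence I would use $P$ as a strict Lyapunov functional, $\tfrac{d}{dt}P(\bx(t))=-\|\nabla P(\bx(t))\|^{2}\le 0$ with equality only at equilibria. The box bound keeps the trajectory in a compact set, and to keep it away from $\partial\R^{N}_{+}$ I would use the symmetry identity $\sum_{i,j}M_{ij}(x_j-x_i)=0$, which yields $\tfrac{d}{dt}\sum_i x_i=\sum_i\sigma_i x_i(u_i-x_i^{p})>0$ when all $x_i$ are small, so the origin is repelling; together with $\dot x_i|_{x_i=0}=\sum_j M_{ij}x_j\ge 0$ this confines the orbit to a compact subset of the open orthant. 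LaSalle's invariance principle then puts the $\omega$-limit set inside the equilibrium set of $\R^{N}_{+}$, which by uniqueness is $\{\bx^{*}\}$, so $\bx(t)\to\bx^{*}$. I expect the Hessian lemma to be the main obstacle: it is exactly where the loss of convexity and of translation invariance must be absorbed, and producing the sum-of-squares identity — in particular the weights $v_i\sqrt{x_j/x_i}$, which form a perfect square only \emph{at} a true equilibrium — is the heart of the matter. The degree count, the implicit function theorem and the LaSalle step are then essentially routine, the one remaining delicate point being the boundary bookkeeping (tied to the standing assumptions that $M$ is connected and that some $\sigma_i>0$) required to run the degree and Lyapunov arguments on the open orthant.
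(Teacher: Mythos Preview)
Your proposal is correct and follows the same global architecture as the paper --- gradient structure, Brouwer degree for existence, nondegeneracy of the Jacobian at critical points to turn the degree into a count, implicit function theorem for smoothness --- but differs substantively in two places. For the Hessian lemma (the paper's Lemma~\ref{Jac}), the paper proves $\det D_{\bx}F(\bx^{*})>0$ by a lengthy combinatorial expansion of the determinant over permutations, splitting the product $\prod_i g_i$ into pieces $I_{N-n,k}$ and matching them term-by-term against the off-diagonal contributions $J_{j,k}$; your sum-of-squares identity for $\bv^{\top}\mathrm{Hess}\,P(\bx^{*})\,\bv$ is much shorter and in fact gives more, namely positive \emph{definiteness} rather than merely a positive determinant (the paper eventually extracts the M-matrix property, but only after the full expansion). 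For global convergence the paper invokes the \L ojasiewicz gradient inequality for the real-analytic potential $\Phi$ (Lemma~\ref{arc}) to bound the arc length of the orbit and force convergence to an accumulation point \emph{before} appealing to uniqueness; you instead establish uniqueness first and then invoke LaSalle, which is lighter machinery and entirely adequate here since the equilibrium set in the open orthant has already been shown to be a singleton. The paper's use of \L ojasiewicz would survive without uniqueness, but that robustness is not needed. The remaining differences are cosmetic: the paper's degree homotopy deforms the conviction vector $\bu$ to a constant $(u,\dots,u)$ (where the unique equilibrium is explicit) rather than deforming $\nabla P$ to an affine map, and the paper's a priori confinement is stated as a dynamic min/max principle on trajectories (Lemma~\ref{mmp}) rather than only on equilibria. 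Your flagging of the standing hypotheses --- connected $M$ and at least one $\sigma_i>0$ --- is correct; the paper imposes these tacitly just before the analysis.
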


Before proving this result we present a standard analysis of the convergence of the Abelson model.


\section{Continuous Degroot Learning: The Abelson Model}\label{warmup}
In this section, we will do a quick analysis of model \eqref{eq:EAc}, which we recall below
\begin{align*}
    \dot{x}_i&=\sum_{j=1}^N A_{ij}(x_j-x_i), \ \ \ i=1,...,N,\\
    x_{i0}&=x_i(0), \ \ i=1,..,N.
\end{align*}
Let the average opinion value be denoted by $\bar{x}=\frac{1}{N}\sum_{j=1}^Nx_j$. As $A$ is symmetric, this average opinion is conserved in time,
\begin{align}
    \ddt \bar{x}=\frac{1}{N}\sum_{i,j=1}^NA_{ij}(x_j-x_i)=0
\end{align}

Denote $\max_i x_i(t)=x_+(t)$ and $\min_i x_i(t)=x_-(t)$. Then, the continuous Degroot learning system obeys the maximum and minimum principles. Indeed, as $x_+(t), x_-(t)$ are Lipschitz continuous, we can differentiate almost everywhere to get,

\begin{align*}
  \ddt x_+(t)&=  \sum_{j=1}^N A_{+j}(x_j-x_+)\leq 0,\\
  \ddt x_-(t)&= \sum_{j=1}^N A_{-j}(x_j-x_-)\geq 0,
\end{align*}
where $A_{+j}$ and $A_{-j}$ are the coefficients associated with the edge connecting nodes $x_+(t)$ (or $x_-(t)$) and $x_j$.

It is easy to see that any consensus, i.e.\ any constant vector of opinions $\bx=(x,...,x)$, represents a steady state of the system. As $\sum_j A_{ij}(x_j-x_i)=- \mathbf 1_i^\top (D_A-A)\bx$, with $\mathbf 1_i$ the $i$-th canonical vector and $D_A$ the diagonal matrix with $i$-th diagonal entry $\sum_jA_{ij}$,  we see that the first eigenvalue $\kappa_1$ of $D_A-A$ is zero, $\kappa_1=0$. Further, as $A$ is connected, the second eigenvalue of $D_A-A$, the Fiedler number, is positive, $\kappa_2>0$, which means that consensus is the only possible steady state of the system. As the average is conserved, we now see that the dynamics must lead to consensus at the average value $\bx=(\bar{x},...,\bar{x})$. 
Indeed, if the graph is connected, then we can prove an exponential rate of convergence. From the variational characterization $\kappa_2 = \min_{\bx,\by}\langle (D_A-A)\bx,\by\rangle/\|\bx\|\|\by\|$  and the Cauchy-Schwarz inequality, we get 
\[
\langle (D_A-A)\bx,\by\rangle\geq \kappa_2 \langle \bx,\by\rangle\,
\]
for any two vectors $\bx,\by$. Thus, 
\begin{align*}
    \ddt(x_+-x_-)(t)&=\sum_{j=1}^NA_{+j}(x_j-x_+)-A_{-j}(x_j-x_-),\\
    &= -\langle (D_A-A)\bx,\mathbf{1}_+-\mathbf 1_-\rangle \leq  -\kappa_2 (x_+-x_-),
\end{align*}
yielding $(x_+-x_-)(t)\leq (x_+-x_-)(0)e^{-\kappa_2t}$. An important note for the Abelson model is that it reaches consensus for any entrywise nonnegative matrix, while in the original Degroot model, periodic matrices will not necessarily yield consensus, but rather a periodic behavior due to the synchronous switching of opinion values at each time step. Further, if the matrix $A$ is not symmetric, then the average opinion value will not be conserved, and the limiting consensus value will be an emergent convex combination of the initial values.

In general, consensus is not a realistic result. Introducing stubbornness to the system leads to the possibility of a different equilibrium where not all agents agree, as is more often seen in reality. Incorporating stubbornness leads to a diagonal shift of the graph Laplacian, in the case of the Taylor model \eqref{eq:FJc}, this shift is positive and linear and therefore stable. This results in the first eigenvalue, $\k_1$, of $D_A-A$ shifting from zero to being strictly positive, reducing the family of steady states that is consensus in the Abelson model, to a unique equilibrium in the Taylor model. The difficulty in analyzing our particular nonlinear forcing is that the diagonal shift is not stable. Indeed, in general, the Jacobian at the fixed point will not be diagonally dominant as in the case of the Taylor model. Therefore, we cannot immediately deduce a unique equilibrium value as we cannot a priori know where the eigenvalues will shift. In the following section, we prove first that the dynamics remain bounded away from the trivial solution at $\bx=0$. Using this fact we can determine that the Jacobian at any positive equilibrium is not critical. However, due to the lack of convexity within the payout functions and hence the gradient flow, we utilize the Brouwer topological degree to achieve uniqueness of the steady state. The main theorem will then be proved after using the Lojasiewicz gradient inequality to achieve convergence for all positive initial data.

\section{Main results: Nonlinear Opinion Dynamics}\label{MR}
In this section, we will prove the main result of the paper, Theorem \ref{T:main}, showing existence of a unique Nash Equilibrium for the model \eqref{eq:NFJc}. Let us rewrite the system here for reference,
\begin{align}\label{model}
 \dot{x}_i&=\sum_{j=1}^N M_{ij}(x_j-x_i)+\sigma_i(u_i-x_i^p)x_i, \ \ \s_i\in[0,\infty],\\
 x_{i0}&=x_i(0), \ \ i=1,...,N,
\end{align}
where $M$ is any symmetric entrywise nonnegative matrix. Then this can be viewed as a model of opinion dynamics where $x_i(t)$ tracks opinion values, $u_i$ represents a fixed conviction value, and each $\sigma_i$ is a fixed measure of each agent's stubbornness. The first half of the model acts as averaging amongst the other agents to bring opinions towards a consensus value. While the second half, derived from the Rayleigh's friction and self-propulsion in \cite{LRS}, attempts to push each opinion toward the conviction value $u_i^{1/p}$. Note that the stubbornness parameters, $\s_i$, are allowed to take the values zero or infinity. When $\s_i=0$, the agent behaves exactly as an agent in the Abelson model, converging to the average value of its connections, while for $\s_i=\infty$, the agent is perfectly stubborn and remains fixed at value $u_i^{1/p}$. These terms represent agents that are completely flexible (not stubborn) in the case of $\s_i=0$, and in the case of $\s_i=\infty$, can be seen as a source term like a particular newspaper or journal. As the dynamics of these cases are already known  we need only consider the case of $0<\s_i<\infty$ for all $i=1,...,N.$


\subsection{Boundedness of the dynamics}
First, let us prove that all opinion values satisfy a type of Maximum Principle.
\begin{lemma}[Minimum/Maximum Principle]\label{mmp}
For initial conditions $x_{i0}\in \R_+$, for each $i=1,...,N$, the solutions to \eqref{model} remain bounded above and below for all time, i.e.,
\begin{align}\label{mmpineq}
    \min_{i}\{\min(x_i^p(0),u_i)\}=c_-\leq x_i^p(t)\leq c_+= \max_{i}\{\max(x_i^p(0),u_i)\}
\end{align}
for all $t\geq0$.
\end{lemma}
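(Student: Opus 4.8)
The plan is to run the classical comparison argument, tracking the extreme opinions $x_+(t)=\max_i x_i(t)$ and $x_-(t)=\min_i x_i(t)$ exactly as in the warm-up for the Abelson model, but now retaining the nonlinear diagonal term, whose sign turns out to be favorable. \emph{Setup.} Since $p>0$, the map $s\mapsto (\max(s,0))^{p+1}$ is $C^1$ on $\R$ with derivative vanishing at $0$, so replacing $x_i^p$ by $(\max(x_i,0))^p$ extends the right-hand side of \eqref{model} to a locally Lipschitz vector field on $\R^N$; Picard--Lindel\"of gives a unique local solution $\bx(t)$, and once we show $\R_+^N$ is forward invariant this extension agrees with \eqref{model}, so nothing is lost. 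The functions $x_\pm$ are locally Lipschitz, hence absolutely continuous and differentiable a.e.; the key elementary fact is that at a point $t$ where $x_+$ is differentiable, if $x_k(t)=x_+(t)$ then the nonnegative function $x_+-x_k$ has a minimum at $t$, whence $\dot x_+(t)=\dot x_k(t)$ for every such active index $k$ (and symmetrically for $x_-$). Forward invariance of $\R_+^N$ then follows because at a.e.\ $t$ with $x_-(t)=0$ and $m$ active, $\dot x_-(t)=\sum_j M_{mj}x_j(t)\ge 0$ (as $M_{mj}\ge 0$ and $x_j(t)\ge x_m(t)=0$), and a first-crossing argument rules out $x_-$ ever becoming negative.

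\emph{Upper bound.} At a.e.\ $t$, taking $k$ active for $x_+$ and using $M_{kj}\ge 0$ with $x_j(t)\le x_k(t)=x_+(t)$,
\begin{align*}
\dot x_+(t)=\sum_{j=1}^N M_{kj}\big(x_j(t)-x_+(t)\big)+\sigma_k\big(u_k-x_+^p(t)\big)x_+(t)\le \sigma_k\big(u_k-x_+^p(t)\big)x_+(t).
\end{align*}
If $x_+^p(t)>c_+$ then $x_+(t)>c_+^{1/p}\ge 0$ and $u_k\le c_+<x_+^p(t)$ by definition of $c_+$, so the right-hand side is $\le 0$; thus $\dot x_+(t)\le 0$ whenever $x_+^p(t)>c_+$. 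Since $x_+^p(0)=\max_i x_i^p(0)\le c_+$ and $s\mapsto s^p$ is increasing on $\R_+$, a first-crossing argument (suppose $x_+^p(t_1)>c_+$, let $t_0$ be the last time before $t_1$ at which $x_+^p=c_+$, and integrate $\dot x_+\le 0$ on $(t_0,t_1)$) forces $x_+^p(t)\le c_+$ for all $t\ge 0$.

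\emph{Lower bound and conclusion.} Symmetrically, with $m$ active for $x_-$ at a differentiability point and using $x_j(t)\ge x_-(t)\ge 0$,
\begin{align*}
\dot x_-(t)=\sum_{j=1}^N M_{mj}\big(x_j(t)-x_-(t)\big)+\sigma_m\big(u_m-x_-^p(t)\big)x_-(t)\ge \sigma_m\big(u_m-x_-^p(t)\big)x_-(t),
\end{align*}
which is $\ge 0$ whenever $x_-^p(t)<c_-\le u_m$. Since $x_-^p(0)\ge c_-$, the same first-crossing argument gives $x_-^p(t)\ge c_-$ for all $t\ge 0$, and combining the two bounds is exactly \eqref{mmpineq}. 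These a priori bounds also preclude finite-time blow-up, so the local solution extends to a global one on $[0,\infty)$ and stays in $\R_+^N$. I expect the only point needing real care to be the bookkeeping for $x_\pm$ being merely Lipschitz with a possibly switching active index — handled by the observation above that $\dot x_+(t)=\dot x_k(t)$ for any active $k$ at a differentiability point; there is no deeper obstacle, since the friction term $\sigma_i(u_i-x_i^p)x_i$ has precisely the sign needed at the running max and at the running min once an opinion overshoots or undershoots its conviction level.
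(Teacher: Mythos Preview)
Your argument is correct and follows essentially the same comparison strategy as the paper's proof: track $x_\pm(t)$, drop the nonnegative coupling sum to isolate the friction term, and use its sign to run a first-crossing argument. You are somewhat more careful than the paper about the technical scaffolding (Lipschitz extension of the vector field, forward invariance of $\R_+^N$, a.e.\ differentiability of $x_\pm$, and global existence), whereas the paper's version is terser and splits into two cases depending on whether $c_-$ is attained by an initial datum or a conviction; but the core mechanism is identical.
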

\begin{proof}
Initially \eqref{mmpineq} is satisfied. Let $x_-(t)=\min_i x_i(t)$,  then differentiating,
\begin{align*}
\ddt x_-&=\sum_{j=1}^N M_{-j}(x_j-x_-)+\sigma_-(u_--x_-^p)x_-\geq \sigma_-(u_--x_-^p)x_-\, .
\end{align*}
Suppose $c_-=x_-^p(0)<u_-$, then $\ddt x_-(0)>0$, and $x_-^p(t)\geq c_-$ for all $t\geq 0$.

Now suppose $c_-=u_-\leq x_-^p(0)$. Then, if for some $T\geq0$, $x_-^p(T)=c_-$, we have 
\begin{align}
    \ddt x_-(T)=\sum_{j=1}^N M_{-j}(x_j(T)-x_-(T))\geq 0.
\end{align}
Therefore $x_-^p(t)\geq c_-$ for all time. An analogous argument gives $x_+^p(t)\leq c_+$ as well.
\end{proof}

Now, any steady state of the system satisfies
\begin{equation}\label{steady}
\sum_{j=1}^N M_{ij}(x_j-x_i)+\sigma_i(u_i-x_i^p)x_i=0
\end{equation}
for each $i=1,...,N$. An immediate estimate on the steady state, from dropping the sum, gives:
\begin{equation}\label{ezb}
\min_i u_i \leq x_j^p\leq \max_i u_i,
\end{equation}
which, for identical conviction values, $u_i \equiv u$, gives a unique steady state $x_i^p=u$, for all $i=1,...,N$, which of course is a consensus state. Indeed, due to the connectivity of the graph, and along with Lemma \ref{mmp}, this further implies that unless $u_i \equiv u$ for all $i$, we have $x_i(t)>c_-$ for all $t>0$, and analogously for $x_i(t)<c_+$.\\

\subsection{Uniqueness}
Now let us show that there is a unique steady state for the model. Let us define the map $F:\mathbb R^N\to\mathbb R^N$ that assigns to any $\bx \in \mathbb R^N_+$ the vector $F(\bx)$ with entries
\begin{align}\label{Fmap}
F(\bx)_i = d_ix_i-\sum_{j=1}^NM_{ij}x_j+\sigma_i(x_i^p-u_i)x_i\, ,
\end{align}
with $d_i=\deg(i)$. Indeed, when the map $F(\bx)_i=0$ for all $i=1,...,N$, the original system is at a steady state, i.e. \eqref{steady} is satisfied.

We show that any steady state of the system is not critical for the Jacobian of this map.

\begin{lemma}\label{Jac}
Let $\bx^*$ satisfy equation \eqref{steady}. For any set of parameters $(\bu,M,\boldsymbol{\s})$, such that $u_i \in \R_+$, $M$ is symmetric, entrywise nonnegative, $M_{ij}\geq 0$, and $\s_i\geq 0$, then the Jacobian of $F$ at $\bx^*$ is not critical.
\end{lemma}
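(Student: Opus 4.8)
The Jacobian of $F$ at a point $\bx$ is
\[
J(\bx) = D_M - M + \diag\big(\sigma_i((p+1)x_i^p - u_i)\big),
\]
where $D_M$ is the diagonal degree matrix of $M$, so that $D_M - M$ is the (weighted) graph Laplacian $\mathcal L$. The plan is to show that $J(\bx^*)$ is nonsingular at any solution $\bx^*$ of \eqref{steady}, which is what ``not critical'' means here. The key idea is to exploit the steady-state equation to control the sign of the diagonal perturbation relative to the Laplacian, and then rule out a zero eigenvalue by a Perron--Frobenius / irreducibility argument.

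First I would rewrite the steady-state relation \eqref{steady} at node $i$ as $\sigma_i(u_i - (x_i^*)^p)x_i^* = \sum_j M_{ij}(x_i^* - x_j^*) = (\mathcal L \bx^*)_i$, and divide by $x_i^* > 0$ (using \lem{mmp} and the remark after \eqref{ezb} that $x_i^* \geq c_- > 0$ when the convictions are not all equal; the equal case gives the explicit consensus solution and is handled separately) to get $\sigma_i(u_i - (x_i^*)^p) = (\mathcal L\bx^*)_i / x_i^*$. Substituting into the Jacobian diagonal, the $i$-th diagonal entry of the perturbation becomes $\sigma_i((p+1)(x_i^*)^p - u_i) = \sigma_i p (x_i^*)^p - (\mathcal L \bx^*)_i/x_i^*$. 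Thus
\[
J(\bx^*) = \mathcal L + \diag\!\Big(\sigma_i p (x_i^*)^p\Big) - \diag\!\Big(\tfrac{(\mathcal L\bx^*)_i}{x_i^*}\Big).
\]
Now suppose $J(\bx^*)\bv = 0$ for some $\bv \neq 0$. Consider the index $k$ maximizing $v_i / x_i^*$ (a discrete-maximum-principle trick, scaling $\bv$ componentwise by the positive vector $\bx^*$). Writing $w_i = v_i/x_i^*$, the relation $(J\bv)_k = 0$ becomes, after expanding the Laplacian term $\sum_j M_{kj}(v_k - v_j) = \sum_j M_{kj} x_j^*(w_k - w_j)$,
\[
\sum_{j} M_{kj} x_j^* (w_k - w_j) + \sigma_k p (x_k^*)^p w_k = 0 .
\]
Since $w_k \geq w_j$ for all $j$ and $M_{kj} x_j^* \geq 0$, the first sum is $\geq 0$; if $w_k > 0$ the second term is strictly positive, a contradiction, so $w_k \leq 0$, i.e. $\max_i w_i \leq 0$. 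The symmetric argument at the minimizing index gives $\min_i w_i \geq 0$, hence $\bv = 0$. This contradiction shows $J(\bx^*)$ is nonsingular.

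\textbf{Main obstacle.} The delicate point is the strict positivity needed to force the contradiction: one needs $\sigma_k p (x_k^*)^p w_k$ to be \emph{strictly} positive, which fails if $\sigma_k = 0$. So the argument above only directly closes when every $\sigma_i > 0$; for the statement as written ($\sigma_i \geq 0$) one must handle nodes with $\sigma_i = 0$ by propagating information along the graph. The clean way is to use connectedness: if $\max_i w_i = m \geq 0$ is attained but the strict-positivity term vanishes at the maximizing node (because $\sigma_k = 0$ or $m=0$), then $\sum_j M_{kj} x_j^*(w_k - w_j) = 0$ forces $w_j = m$ for every neighbor $j$ of $k$; iterating over the connected graph gives $w \equiv m$ everywhere, and then evaluating $(J\bv)_i = 0$ at any node with $\sigma_i > 0$ (which exists, since we are in the case not all $\sigma_i$ are $0$ — the all-zero case being the Abelson model, already treated) forces $m \sigma_i p (x_i^*)^p = 0$, hence $m = 0$; combined with the mirrored argument for the minimum, $\bv = 0$ again. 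I expect packaging this connectedness/irreducibility propagation cleanly — rather than the core maximum-principle estimate — to be the part that needs the most care.
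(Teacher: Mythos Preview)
Your argument is correct and substantially cleaner than the paper's. Both proofs start from the same rewriting of the Jacobian's diagonal using the steady-state relation, obtaining $g_i=\sigma_i p(x_i^*)^p+\sum_jM_{ij}x_j^*/x_i^*$. From there the paper carries out a long combinatorial expansion of $\det(G-M)$ via the Leibniz formula: it expands $\prod_i g_i$ into terms $I_0,\dots,I_N$, further decomposes each $I_{N-n}$ by cycle length, does the same for the off-diagonal contributions $J_j$, and matches $I_{N-n,k}=J_{n+k,n}$ term by term to leave a strictly positive remainder. Only at the very end does the paper observe that the same computation on principal minors shows $G-M$ is an invertible M-matrix. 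Your route gets to the M-matrix structure directly: the identity $(J\bv)_k=\sum_jM_{kj}x_j^*(w_k-w_j)+\sigma_k p(x_k^*)^{p+1}w_k$ (you dropped a factor of $x_k^*$ in the second term, but the sign is unaffected) is exactly the statement that $J(\bx^*)\,\diag(x_i^*)$ is a Z-matrix with nonnegative row sums $\sigma_k p(x_k^*)^{p+1}$, and your maximum-principle plus connectedness propagation is the standard proof that an irreducible, weakly diagonally dominant Z-matrix with at least one strictly dominant row is nonsingular. What your approach buys is brevity and a transparent link to classical M-matrix theory; what the paper's approach buys is an explicit positive lower bound on the determinant (namely $I_0+I_1$), which is not needed for the lemma but could in principle be mined for quantitative stability estimates. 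Your handling of the degenerate case $\sigma_k=0$ via propagation along the connected graph is the right fix and in fact covers the slightly more general hypothesis $\sigma_i\geq 0$ with at least one $\sigma_i>0$, whereas the paper's $I_0>0$ step tacitly uses $\sigma_i>0$ for all $i$ (which is the standing assumption made just before the lemma).
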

\begin{proof}
 We compute the Jacobian Matrix of $F$ as,
\begin{align}
D_{\bx}F(\bx)=G-M
\end{align}
where $G=\mathrm{diag}\{g_i\}_{i=1}^N$ and $g_i=d_i+\sigma_i((p+1)x_i^p-u_i)$. In this way we can see exactly that the Jacobian is a diagonal shift of the graph Laplacian, with diagonal shift given by $\sigma_i((p+1)x_i^p-u_i)$. Indeed if $\sigma_i((p+1)x_i^p-u_i)$ were strictly positive for all $i=1,...,N$, the shift would be stable; however, in general this is not true. Therefore, we utilize \eqref{steady} and the fact that $x_i>0$ to see that
\begin{align}
g_i=\sigma_ipx_i^p+\sum_{j=1}^NM_{ij}\frac{x_j}{x_i}.
\end{align}
Note that each $g_i>0$ while the off-diagonal entries are given by $-M_{ij}\leq 0$. Indeed the matrix for which we must compute the determinant is of the form
\[G-M= \begin{bmatrix}
g_1 & -M_{12} & \dots & -M_{1N}\\
-M_{12} & g_2 &  & \vdots \\
\vdots &  & \ddots & \vdots \\
-M_{1N} & \dots & \dots & g_N\\
\end{bmatrix}=\begin{bmatrix}
c_{11} & c_{12} & \dots & c_{1N}\\
c_{12} & c_{22} &  & \vdots \\
\vdots &  & \ddots & \vdots \\
c_{1N} & \dots & \dots & c_{NN}\\
\end{bmatrix}.
\]
Now, the determinant of $D_{\bx}F(\bx)=G-M$ is given by summing over all cycles of the graph (up to the sign of each cycle). Thus, we can explicitly compute the determinant, 
\begin{align}
\det{D_{\bx}F(\bx)}=\sum_{\d \in S_N}\mathrm{sgn}(\d) \prod_{i=1}^Nc_{i,\d(i)}
\end{align}
where  $\d$ is a cycle of the matrix $G-M$, $S_n$ is the permutation group, and $c_{i,j}$ is the element in the $i$th row and $j$th column of $D_{\bx}F(\bx)$. In this particular case, as all the off-diagonal terms are nonpositive, and the main diagonal is strictly positive, the only positive summand is the one corresponding to the main diagonal, while the rest will all be negative. Therefore, 
\begin{align}
\det{D_{\bx}F(\bx)}=\prod_{i=1}^Nc_{ii}-\sum_{\substack{\d \in S_N,\\\d \neq \{1,...,N\}}}\mathrm{sgn}(\d)\prod_{k=1}^Nc_{k,\d(k)},
\end{align}
where $-\mathrm{sgn}(\d)\prod_{k=1}^Nc_{k,\d(k)}\leq 0$ for all $\d \neq \{1,...,N\} \in S_N$.

To see that this is positive let us observe that since $c_{ii}=g_i$,
\begin{align}
\prod_{i=1}^N c_{ii}&=\prod_{i=1}^N(\sigma_ipx_i^p+\sum_{k=1}^NM_{ik}\frac{x_k}{x_i}),\\
&=p^N\prod_{i=1}^N\sigma_ix_i^p+p^{N-1}\prod_{l=1}^N\prod_{i\neq j}\sigma_ix_i^p\sum_{j=1}^NM_{jl}\frac{x_l}{x_j}+ \dots\\
&\dots +p\sum_{i=1}^N\sigma_ix_i^p\prod_{j\neq i}\sum_{l=1}^NM_{jl}\frac{x_l}{x_j}+\prod_{j=1}^N\sum_{l=1}^NM_{jl}\frac{x_l}{x_j},\\
&=I_0+\dots + I_N.\\
I_{N-n}&=p^n\sum_{i_1<i_2<...<i_n}^N\prod_{a \in \{i_n\} }\sigma_ax_a^p\prod_{l \not\in \{i_n\}}\sum_{j=1}^NM_{jl}\frac{x_l}{x_j},
\end{align}
where $\{i_n\}$ is the set of $n$ indices that are multiplying the first terms in the product $p\s_ix_i^p$, and the rest are multiplying the second terms $\sum_{k=1}^N  M_{ik}\frac{x_k}{x_i}$. Each summand of $I_j$ is positive, but in particular we see that for any pair $i,j$ the product $M_{ij}\frac{x_j}{x_i}\times M_{ji}\frac{x_i}{x_j}=M_{ij}M_{ji}.$ 

Therefore, within the summands of $I_N$ we find every cycle of the graph $M$, all with a positive sign. In fact, we can break $I_N$ down into $N$ different parts determined by the length of cycles in the graph.
\begin{align}
    I_N&=\sum_{\d \in S_N}\prod_{k=1}^NM_{k,\d(k)}+ \sum_{j=1}^N\sum_{\d \in S_{N-1}\backslash j}\left( \sum_{l\neq j}M_{jl}\frac{x_l}{x_j}\right)\prod_{k\neq j}M_{k,\d(k)}+\dots,\\
    &:=I_{N,0}+I_{N,1}+\dots,
\end{align}
where $S_{N-1}\backslash j$ denotes the permutation group of the indices $\{1,...,j-1,j+1,...,N\}$. Then in general we have for $j=1,...,N-2$
\begin{align}
    I_{N,j}&=\sum_{k_1<...<k_{j}}^N\sum_{\d \in S_{N-j}\backslash \{k_j\}}\prod_{k \in \{k_j\}}\left(\sum_{l\not\in\{k_j\}}M_{kl}\frac{x_l}{x_k}\right)\prod_{n \not\in \{k_j\}}M_{n,\d(n)},\\
    I_{N,N-1}&=\sum_{i=1}^NM_{ii}\prod_{l\neq i}M_{li}\frac{x_i}{x_l},
\end{align}
where again $S_{N-j}\backslash\{k_j\}$ is the permutation group of $N-j$ elements ignoring the $j$ indices within the set $\{k_j\}$. Thus we have decomposed $I_N$ as,
\begin{align}
    I_N=\sum_{j=0}^{N-1}I_{N,j}.
\end{align}
We do the same for each $I_{N-n}$ so that
\begin{align}
    I_{N-n}&=\sum_{j=0}^{N-n-1}I_{N-n,j},\\
    I_{N-n,0}&=p^n\sum_{i_1<...<i_n}^N\prod_{a \in \{i_n\}}\sigma_ax_a^p\sum_{\d \in S_{N-n}\backslash \{i_n\}}\prod_{k \not \in \{i_n\}}M_{k,\d(k)},\\
    I_{N-n,j}&=p^n\sum_{i_1<i_2<...<i_n}^N\prod_{a \in \{i_n\}}\sigma_ax_a^p\sum_{\substack{k_1<k_2<...<k_j \not\in \{i_n\} \\ \d \in S_{N-n-j}\backslash \{\{i_n\},\{k_j\}\}}}\prod_{l \in \{k_j\}}\sum_{m \not\in \{k_j\}}M_{lm}\frac{x_m}{x_l}\prod_{k \not \in \{\{i_n\},\{k_j\}\}}M_{k,\d(k)}.
\end{align}
Now, let us  break down the negative terms similarly,
\begin{align}
    &\sum_{\substack{\d \in S_N,\\\d \neq \{1,...,N\}}}\mathrm{sgn}(\d)\prod_{k=1}^Nc_{k,\d(k)}=\sum_{j=0}^{N-2}J_j,\\
    J_j&=\sum_{\substack{\d \in S_N \\ \exists_j k: \d(k)=k}}\prod_{k=1}^Nc_{k,\d(k)},\\
    &=\sum_{k_1<k_2<...<k_j}\sum_{\d \in S_{N-j}\backslash\{k_j\}}\prod_{k\in \{k_j\}}m_k\prod_{l\not\in\{k_j\}}M_{l,\d(l)}.
\end{align}
Now, as $m_k=p\sigma_kx_k^p+\sum_{j=1}^NM_{kj}\frac{x_j}{x_k}$, we can further split up each $J_j$ into parts determined by how many powers of $p$ the part has,
\begin{align}
    J_j&=\sum_{k=0}^{j}J_{j,k},\\
    J_{j,k}&=p^k\sum_{\substack{k_1<...<k_j \\ \d \in S_{N-j}\backslash \{k_j\}}}\prod_{l \not\in \{k_j\}} M_{l,\d(l)}\sum_{n_1<...<n_k}\prod_{n\in \{n_k\}}\sigma_nx_n^p\prod_{\substack{m\not\in \{n_k\} \\  m \in \{k_j\}}}\sum_{s\not\in \{k_j\}}M_{ms}\frac{x_s}{x_m}.
\end{align}

Finally, by combining all parts that have the same powers of $p$, from the positive $I$ terms, and the negative $J$ terms we have
\begin{align}
    I_{N-n,k}=J_{n+k,n}, \ \ 0\leq k\leq N-n-2,
\end{align}
where all positive $p^n$ powers are found in $I_{N-n}=\sum_{k=0}^{N-n-1}I_{N-n,k}$ and all negative powers within $\sum_{k=n}^{N-2}J_{k,n}$. In which case,
\begin{align}
    I_{N-n}-\sum_{k=n}^{N-2}J_{k,n}=I_{N-n,N-n-1}\geq 0.
\end{align}
Thus summing over all $n=0,...N$ we get,
\begin{align}
    \det{D_{\bx}F(\bx)}=\sum_{n=0}^N \left(I_{N-n}-\sum_{k=n}^{N-2}J_{k,n}\right)=I_0+I_1+\sum_{n=0}^{N-2}I_{N-n,N-n-1}>0,
\end{align}
Where $I_0,I_1>0$ always, and $\sum_{n=0}^{N-2}I_{N-n,N-n-1}=0$ in the case of no self loops within the underlying graph, and otherwise is strictly positive with self loops.

Noting that this computation is identical for any principal minor of the matrix, we conclude that the determinant of every principal minor is also strictly positive, and the Jacobian Matrix is in fact an invertible M-Matrix with strictly positive eigenvalues.
\end{proof}

If the payout function \eqref{eq:pNFJ} were convex, then non-degeneracy of the Jacobian would immediately grant uniqueness of the steady state. However, without convexity we utilize the Brouwer topological degree in order to achieve the result. To achieve the uniqueness we utilize the fact that on a particular region $\cW$, the topological degree of a point $\by$ is defined as
\begin{equation}
    \mathrm{deg}\{F,\cW,\by\}=\sum_{\bx \in F^{-1}(\boldsymbol{y})} \mathrm{sgn} (\det{D_{\bx}F(\bx)}).
\end{equation}
Now, since we have determined that  $\mathrm{sgn} (\det{D_{\bx}F(\bx)})=1$, for any $\bx \in F^{-1}(\boldsymbol{0})$, if we can show that the degree is exactly one, then there must be a unique fixed point within the region  $\cW$.
For background on the Brouwer topological degree see \cite{Cronin}. 

\begin{lemma}
For any set of parameters $(\bu,M,\boldsymbol{\s})$, such that $u_i \in \R_+$, $M\geq 0$ is symmetric, and $\s_i\geq 0$, there exists a unique solution $\bx^*$ to \eqref{steady} which is a locally exponentially stable equilibrium for the system \eqref{model}. The map $(\bu,M,\boldsymbol{\s})\mapsto \bx^*$ 
is infinitely smooth.
\end{lemma}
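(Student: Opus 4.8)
The plan is to combine the two preceding lemmas with a degree-theoretic compactness argument and then invoke the {\L}ojasiewicz inequality for convergence. First I would fix parameters $(\bu,M,\boldsymbol\s)$ with $0<\s_i<\infty$ (the boundary cases $\s_i\in\{0,\infty\}$ being already handled) and restrict attention to the open bounded box $\cW=\prod_i\{c_-^{1/p}<x_i<c_+^{1/p}\}$, or a slight enlargement thereof. By \lem{mmp} and the estimate \eqref{ezb}, every solution of \eqref{steady} lies in the interior of $\cW$, so $F$ has no zeros on $\p\cW$ and $\deg\{F,\cW,\bo\}$ is well defined. The key point, already noted after \lem{Jac}, is that at every zero $\bx^*\in\cW$ the Jacobian $D_\bx F(\bx^*)$ is an invertible M-matrix with $\det>0$, hence $\sign\det D_\bx F(\bx^*)=+1$; therefore $\deg\{F,\cW,\bo\}$ equals the (finite) number of zeros. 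To show this number is exactly $1$ I would compute the degree by a homotopy that deforms $F$ to a map with a single, explicitly known zero — for instance the linear map $\bx\mapsto (D_A-M)\bx+ \mathrm{diag}(\s_i u_i)(\mathbf 1 - \cdots)$, or more simply homotope $\sigma_i \rightsquigarrow t\sigma_i$ and $p$-power term appropriately, checking that no zero crosses $\p\cW$ along the homotopy (which follows from the same a priori bounds applied uniformly in $t$). The degree is homotopy-invariant, so it equals the degree of the endpoint map, which one can read off directly. This yields existence and uniqueness of $\bx^*$ in $\R^N_+$.

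Next, smoothness of $(\bu,M,\boldsymbol\s)\mapsto\bx^*$ is an immediate consequence of the implicit function theorem: the defining equation $F(\bx^*;\bu,M,\boldsymbol\s)=\bo$ has $D_\bx F(\bx^*)$ invertible by \lem{Jac}, and $F$ is $C^\infty$ (indeed real-analytic in the $\sigma_i,u_i,M_{ij}$ and, for $p$ a fixed positive real, smooth in $\bx$ on $\R^N_+$) in all its arguments, so $\bx^*$ depends smoothly on the parameters. Local exponential stability follows because the linearization of \eqref{model} at $\bx^*$ is $\dot{\boldsymbol\xi}=-D_\bx F(\bx^*)\boldsymbol\xi$ and $D_\bx F(\bx^*)$, being a symmetric invertible M-matrix (symmetry from $M=M^\top$), has all eigenvalues strictly positive; hence $-D_\bx F(\bx^*)$ is Hurwitz and the standard linearization theorem gives local exponential attractivity.

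For the global convergence claim stated in \thm{T:main} (not part of this lemma, but needed to complete the proof of the theorem), I would argue that \eqref{model} is the negative gradient flow $\dot\bx=-\n E(\bx)$ of the real-analytic energy $E(\bx)=\sum_i p_i(\bx)$ assembled from \eqref{eq:pNFJ}; since \lem{mmp} confines the trajectory to the compact box $\overline\cW$, the $\omega$-limit set is a nonempty compact connected set of equilibria, and by uniqueness it is exactly $\{\bx^*\}$. The {\L}ojasiewicz gradient inequality at $\bx^*$ then upgrades this to actual convergence of $\bx(t)\to\bx^*$ (with a rate). I expect the main obstacle to be the degree computation: one must produce a homotopy from $F$ to a reference map whose zero set is a singleton \emph{and} verify that the a priori box bounds of \lem{mmp}/\eqref{ezb} persist along the entire homotopy so that no zero escapes through $\p\cW$; handling the nonlinearity $x_i^p$ uniformly in the homotopy parameter is the delicate part, whereas everything downstream (IFT, stability, {\L}ojasiewicz) is routine once the Jacobian non-degeneracy from \lem{Jac} is in hand.
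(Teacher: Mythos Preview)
Your overall architecture is the same as the paper's --- Brouwer degree on a region in $\R^N_+$, combined with \lem{Jac} to conclude that the degree counts the zeros with multiplicity $+1$, then IFT for smoothness and linearization for local exponential stability. The parts downstream of the degree computation (IFT, Hurwitz stability, {\L}ojasiewicz) are indeed routine, as you say.

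The genuine gap is the homotopy itself. Neither of your proposed deformations works. Homotoping $\s_i\rightsquigarrow t\s_i$ degenerates at $t=0$ to the pure Laplacian map $(D_M-M)\bx$, whose zero set is the entire consensus line $\{\bx=c\mathbf 1\}$; the endpoint map therefore has non-isolated zeros, zeros cross $\p\cW$ for any bounded region, and the degree is undefined there. Your linear-map alternative is left unspecified. The paper's key idea is to homotope in the \emph{conviction vector}: set $F^{(\tau)}:=F_{\tau\bu+(1-\tau)\hat\bu}$ with $\hat\bu=(u,\dots,u)$ constant. At $\tau=0$ the steady-state system forces consensus at $x_i=u^{1/p}$ by \eqref{ezb}, so the endpoint has exactly one zero and degree $1$. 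Along the homotopy the a priori bounds \eqref{ezb} depend only on $\max_i(\tau u_i+(1-\tau)u)$ and $\min_i(\tau u_i+(1-\tau)u)$, which stay uniformly controlled, so no zero escapes. The paper also uses a different region --- a wedge $\{\bx:x_i\geq 0,\ \e\leq\|\bx\|_\infty,\ \|\bx\|_{p+1}\leq R\}$ --- rather than your box based on $c_\pm$; note that your $c_\pm$ from \lem{mmp} involve the initial data $x_i(0)$, which is inappropriate for the purely static question of locating equilibria, and in the degenerate case $u_i\equiv u$ the steady state sits exactly on the boundary of the box $[\min u_i^{1/p},\max u_i^{1/p}]^N$, so a ``slight enlargement'' is not optional but essential.
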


\begin{proof}
We consider the Brouwer topological degree of $F$ at zero. To define the degree properly, we restrict $F$ to a wedge region $\cW$. Let us denote
\begin{align}
\langle \bx,\by\rangle= \sum_{i=1}^N \sigma_i x_i y_i, \ \ \ \ \ \|\bx\|_p^p=\sum_{i=1}^N \sigma_ix_i^p.
\end{align}
We define,
\begin{align}
\cW=\{\bx: x_i \geq 0, \ \e\leq \|\bx\|_{\infty}, \|\bx\|_{p+1} \leq R\},
\end{align}
where $R>0$ is large, and $\e$ small, to be determined momentarily. We verify that the image of the boundary does not contain the origin, $0 \not\in F(\partial \cW)$. Indeed, if $x_i=0$ for some $i$,  then $F_i=-\bar{x}<0.$ Now we compute,
\begin{align}
\sum_{i=1}^N F(\bx)_i=-\langle \boldsymbol{u},\bx\rangle +\|\bx\|_{p+1}^{p+1}.
\end{align}
Thus, if $\|\bx\|_{p+1}=R$, we have the bound
\begin{align}
\sum_{i=1}^N F(\bx)_i\geq \|\bx\|_{p+1}^{p+1}-\|\bx\|_{p+1}\|\boldsymbol{u}\|_{\frac{p+1}{p}}>0,
\end{align}
provided $R$ is large enough. Similarly, if $\|\bx\|_{\infty}=\e$, then
\begin{align}
\sum_{i=1}^N F(\bx)_i \leq -u_-\|\bx\|_1+\e^p\|\bx\|_1<0,
\end{align}
provided $\e$ is small enough.

Therefore the value $\boldsymbol{0}$ of $F$ is regular, and its degree can be computed explicitly by
\begin{align}
\mathrm{deg}\{F,\cW,\boldsymbol{0}\}=\sum_{\bx \in F^{-1}(\boldsymbol{0})} \mathrm{sgn} (\det{D_{\bx}F(\bx)}).
\end{align}
However, we have proved that all Jacobians for $\bx \in F^{-1}(\boldsymbol{0})$ are strictly positive. Therefore uniqueness can be shown by proving $\mathrm{deg}\{F,\cW,\boldsymbol{0}\}=1$. This is certainly true for $\hat{\boldsymbol{u}}=(u,\dots,u)$, since we have a unique positively oriented solution from \eqref{ezb}. Indeed, this is because for identical conviction values, consensus is achieved at exactly the conviction value, $u^{1/p}$. Now, fix any such $\hat{\boldsymbol{u}}$ and consider the homotopy of maps
\begin{align}
F^{(\tau)}:=F_{\tau \bu + (1-\tau)\hat{\bu}} 
\end{align}
where $F_{\by}$ denotes the map $F$ in \eqref{Fmap} with $u$ replaced by $\by$. 
Since $\boldsymbol{0} \not\in F^{(\tau)}(\partial \cW)$ for any $\tau$, the Invariance under Homotopy Principle applies and hence,
\begin{equation}
\mathrm{deg}\{F_{\boldsymbol{u}},\cW,\boldsymbol{0}\}=\mathrm{deg}\{F_{\hat{\boldsymbol{u}}},\cW,\boldsymbol{0}\}=1,
\end{equation}
and the proof of uniqueness is finished.

The smoothness of the steady state, $\bx^*$, as a function of $(\boldsymbol{u},M, \boldsymbol{\sigma})$ follows directly from the non-degeneracy of the Jacobian and the Implicit Function Theorem.
\end{proof}

\subsection{Convergence}
With this lemma in hand we need only prove that for any initial data the opinion values actually converge to this equilibrium, achieving the Nash Equilibrium.

Convergence to the equilibrium is proved by first revealing the gradient structure for the system \eqref{model}.
\begin{equation}\label{eq:GF}
\ddt \bx=-\nabla \Phi(\bx),
\end{equation}
for
\begin{align}
\Phi(\bx)=\frac{1}{4}\sum_{i,j=1}^NM_{ij}(x_i-x_j)^2+\frac{1}{p+2}\sum_{i=1}^N \sigma_ix_i^{p+2}-\frac{1}{2}\sum_{i=1}^N\sigma_iu_ix_i^2.
\end{align}
We note that as $\Phi(\bx)$ is bounded, both from above and below, and decreases along the flow, we know that $\Phi(\bx(t))$ must converge somewhere. 

To prove the convergence of $\bx(t)$ itself we will appeal to the Lojasiewicz gradient inequality, stated next.
\begin{theorem}{\cite{L}}
Let $\Phi$ be a real analytic function in a neighborhood of $U$. Then for any $\bx_0 \in U$, there are constants $c>0$,  $\d\in(0,1]$, and $\mu \in [1/2,1)$, such that
\begin{align}
    \|\nabla\Phi(\bx)\|\geq c|\Phi(\bx)-\Phi(\bx_0)|^{\mu}, \ \ \ \ \forall \bx \in U, \ \ \text{such} \ \text{that} \ \|\bx-\bx_0\|\leq \delta.
\end{align}
\end{theorem}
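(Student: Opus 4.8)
The statement to establish is the \emph{\L ojasiewicz gradient inequality} for a real-analytic $\Phi$, and the plan is to reduce it to a normal-form computation via resolution of singularities. First I would normalize the problem. Replacing $\Phi$ by $\Phi-\Phi(\bx_0)$ changes neither side of the claimed inequality (an additive constant leaves $\nabla\Phi$ untouched), so assume $\Phi(\bx_0)=0$; translating, assume $\bx_0=0$. If $\nabla\Phi(0)\neq 0$, the inequality is immediate: by continuity $\|\nabla\Phi(\bx)\|\geq\tfrac12\|\nabla\Phi(0)\|>0$ on a small ball $\|\bx\|\leq\delta$, while $|\Phi(\bx)|^{\mu}$ stays bounded there, so any $\mu\in[1/2,1)$ works after choosing $c$ small. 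Hence the entire content lies in the critical case $\nabla\Phi(0)=0$, $\Phi(0)=0$, near which both sides vanish and the exponent $\mu$ genuinely matters.

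The key structural input is Hironaka's resolution of singularities. Locally near $0$ it furnishes a proper real-analytic map $\pi\colon\tilde U\to U$ from a real-analytic manifold, an analytic isomorphism away from $\Phi^{-1}(0)$, such that in suitable local coordinates $y=(y_1,\dots,y_n)$ on $\tilde U$ the pullback has the normal-crossings form $(\Phi\circ\pi)(y)=u(y)\,y_1^{a_1}\cdots y_n^{a_n}$ with $u$ a non-vanishing analytic unit and $a_j\in\N$. The gradient inequality is essentially trivial for such a monomial: writing $f(y)=y_1^{a_1}\cdots y_n^{a_n}$ and $|a|=a_1+\cdots+a_n$, one computes $\|\nabla f\|^2=f^2\sum_k a_k^2/y_k^2$, and choosing $\mu=1-1/|a|$ one isolates the coordinate $y_{k_0}$ of smallest modulus among those with $a_{k_0}\geq 1$. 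Since $|f|\geq|y_{k_0}|^{|a|}$ one gets $|f|^{1-\mu}\geq|y_{k_0}|$, whence $\|\nabla f\|\geq a_{k_0}|f|/|y_{k_0}|\geq a_{k_0}|f|^{\mu}\geq|f|^{\mu}$. The unit $u$ only perturbs the constant, and $\mu\in[1/2,1)$ as soon as $|a|\geq 2$, which holds at a critical point.

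Third, I would transfer this back to $U$. The chain rule gives $\nabla(\Phi\circ\pi)(y)=(D\pi(y))^{\top}(\nabla\Phi)(\pi(y))$, whence $\|(\nabla\Phi)(\pi(y))\|\geq\|\nabla(\Phi\circ\pi)(y)\|/\|D\pi(y)\|$. Since $\pi$ is proper and analytic, the fiber-saturated set $\pi^{-1}(\overline{B_\delta})$ is compact and $\|D\pi\|$ is bounded there by some $M$; combining with the local monomial bound $\|\nabla(\Phi\circ\pi)(y)\|\geq c'|\Phi(\pi(y))|^{\mu}$ yields $\|(\nabla\Phi)(\pi(y))\|\geq(c'/M)|\Phi(\pi(y))|^{\mu}$. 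Covering the compact fiber $\pi^{-1}(0)$ by finitely many charts in which the normal-crossings form holds, taking the largest of the finitely many exponents $\mu$ and the smallest of the associated constants, and using that $\pi$ surjects onto a neighborhood of $0$, gives the inequality for all $\bx=\pi(y)$ with $\|\bx\|\leq\delta$.

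The main obstacle is unquestionably the invocation of resolution of singularities: Hironaka's theorem in its local real-analytic form is the deep engine, and making the reduction to normal crossings rigorous --- rather than just the monomial computation, which is elementary --- is where the real work lies. A secondary delicacy is the transfer step: one must ensure the constants and the exponent can be taken uniform over the finitely many resolution charts covering the compact fiber, and that properness controls $\|D\pi\|$ from above (dividing by it only weakens the lower bound, which is harmless). An alternative self-contained route avoids resolution and argues directly in the semianalytic category via the curve selection lemma, applied to the semianalytic set $\{\|\nabla\Phi\|^{2}<\e^{2}|\Phi-\Phi(\bx_0)|^{2\mu}\}$, deriving a contradiction along a selected analytic curve through a Puiseux expansion and differentiation; this is \L ojasiewicz's original strategy, which trades resolution for the (still nontrivial) structure theory of semianalytic sets.
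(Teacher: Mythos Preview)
The paper does not prove this theorem at all: it is stated with a citation to \L ojasiewicz's 1963 paper and then immediately invoked as a black box (``As our $\Phi$ is real analytic for all $x_i\in\R_+$, the result applies''). There is therefore no ``paper's own proof'' to compare against; the authors treat the \L ojasiewicz gradient inequality as an external tool, exactly as one cites the implicit function theorem or Perron--Frobenius.

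Your sketch is a perfectly reasonable outline of how the inequality is actually proved --- the resolution-of-singularities route (reducing to a monomial via Hironaka, doing the elementary computation there, and transferring back through the chain rule and a compactness argument over the exceptional fiber) is the standard modern argument, and you correctly flag both the heavy input (Hironaka) and the alternative semianalytic approach via curve selection that \L ojasiewicz himself used. The monomial calculation and the transfer step are handled correctly. But all of this goes well beyond what the paper does or needs: for the purposes of this paper, the theorem is simply quoted, and the only thing to verify is that the potential $\Phi$ in \eqref{eq:GF} is real analytic on the relevant domain, which is immediate since it is polynomial in the $x_i$.
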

As our $\Phi$ is real analytic for all $x_i \in \R_+$, the result applies. Consider a solution $\bx$ in the positive sector, $\R_+^N$ to \eqref{eq:GF}. As every solution is bounded there must be an accumulation point $\bx_0$. We now must show that $\bx(t) \to \bx_0$, and that $\nabla \Phi(\bx_0)=0$, establishing that $\bx_0=\bx^*$, the unique steady state. First, we need a lemma to control the length of the orbit $\bx(t)$ near the accumulation point $\bx_0$.

\begin{lemma}\label{arc}
As long as $\bx(t) \in B_{\d}(\bx_0)$ for $t'\leq t\leq t''$, we have,
\begin{align}
    \int_{t'}^{t''}\|\dot{\bx}(s)\|\ds\leq \int_{\Phi(\bx(t''))}^{\Phi(\bx(t'))} \frac{1}{c|\zeta-\Phi(\bx_0)|^{\mu}}d\zeta.
\end{align}
\end{lemma}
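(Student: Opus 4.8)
The plan is to bound the arc-length of the orbit on the interval $[t',t'']$ by the total variation of a primitive of the Lojasiewicz bound, using the gradient-flow structure \eqref{eq:GF} together with the Lojasiewicz inequality applied at the accumulation point $\bx_0$. The key observation is that along the flow $\ddt \Phi(\bx(t)) = \langle \nabla\Phi(\bx(t)), \dot\bx(t)\rangle = -\|\nabla\Phi(\bx(t))\|^2 = -\|\dot\bx(t)\|^2$, so that $\Phi(\bx(t))$ is nonincreasing and its time-derivative controls the speed $\|\dot\bx(t)\|$.

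First I would fix a connected component of the super-level structure, i.e. work on the interval where $\bx(t)\in B_\d(\bx_0)$, so that the Lojasiewicz inequality $\|\nabla\Phi(\bx(t))\| \geq c\,|\Phi(\bx(t))-\Phi(\bx_0)|^{\mu}$ is available. Then I would consider the function $t\mapsto -\frac{1}{c(1-\mu)}|\Phi(\bx(t))-\Phi(\bx_0)|^{1-\mu}$ (assuming, as one may after discarding the trivial case, that $\Phi(\bx(t))>\Phi(\bx_0)$ on the interval, which holds because $\Phi$ is strictly decreasing unless $\bx$ is already stationary). Differentiating this function in $t$ and using the chain rule together with $\ddt\Phi(\bx(t)) = -\|\dot\bx(t)\|^2$ gives
\begin{align*}
\frac{d}{dt}\left(-\frac{|\Phi(\bx(t))-\Phi(\bx_0)|^{1-\mu}}{c(1-\mu)}\right) = \frac{\|\dot\bx(t)\|^2}{c\,|\Phi(\bx(t))-\Phi(\bx_0)|^{\mu}} \geq \frac{\|\dot\bx(t)\|^2}{\|\nabla\Phi(\bx(t))\|} = \|\dot\bx(t)\|,
\end{align*}
where the last equality uses $\|\nabla\Phi(\bx(t))\| = \|\dot\bx(t)\|$ from \eqref{eq:GF}. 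Integrating this differential inequality from $t'$ to $t''$ yields
\begin{align*}
\int_{t'}^{t''}\|\dot\bx(s)\|\,ds \leq \frac{|\Phi(\bx(t'))-\Phi(\bx_0)|^{1-\mu} - |\Phi(\bx(t''))-\Phi(\bx_0)|^{1-\mu}}{c(1-\mu)} = \int_{\Phi(\bx(t''))}^{\Phi(\bx(t'))} \frac{d\zeta}{c|\zeta-\Phi(\bx_0)|^{\mu}},
\end{align*}
which is exactly the claimed bound, the last step being the fundamental theorem of calculus for the antiderivative $\zeta\mapsto -\frac{1}{c(1-\mu)}|\zeta-\Phi(\bx_0)|^{1-\mu}$ of the integrand.

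The only delicate points are bookkeeping rather than substance: one must ensure $\Phi(\bx(t))-\Phi(\bx_0)\geq 0$ on the interval so the powers are well-defined (true since $\Phi(\bx(t))\downarrow$ some limit $\geq \Phi(\bx_0)$ because $\bx_0$ is an accumulation point and $\Phi$ is continuous), and one should handle the case $\Phi(\bx(t))\equiv\Phi(\bx_0)$ separately, where $\dot\bx\equiv 0$ and both sides vanish. I expect the main (very mild) obstacle to be justifying the almost-everywhere differentiation and the sign conventions when $\mu=1/2$ versus $\mu\in(1/2,1)$; but since $\Phi$ is real analytic and the solution is $C^1$, the chain-rule computation above is valid wherever $\Phi(\bx(t))>\Phi(\bx_0)$, and a standard limiting argument extends the integral inequality to the closed interval.
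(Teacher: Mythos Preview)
Your proof is correct and follows essentially the same approach as the paper: both use the gradient-flow identity $\ddt\Phi(\bx(t))=-\|\nabla\Phi\|^2=-\|\dot\bx\|^2$, differentiate an antiderivative of $\zeta\mapsto 1/(c|\zeta-\Phi(\bx_0)|^\mu)$ composed with $\Phi(\bx(t))$, apply the Lojasiewicz inequality to bound the result below by $\|\dot\bx\|$, and integrate. The only cosmetic difference is that the paper keeps the primitive in abstract form $\Psi(x)=\int_0^x d\zeta/\psi(\zeta)$, whereas you write it out explicitly as $-\frac{1}{c(1-\mu)}|\Phi(\bx(t))-\Phi(\bx_0)|^{1-\mu}$; your additional remarks on the sign of $\Phi(\bx(t))-\Phi(\bx_0)$ and the degenerate case are useful bookkeeping that the paper leaves implicit.
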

\begin{proof}
Let us define the two functions,
\begin{align}
    \psi(\zeta)&=c|\zeta-\Phi(\bx_0)|^{\mu},\\
    \Psi(x)&=\int_0^x\frac{1}{\psi(\zeta)}d\zeta.
\end{align}
Let us compute,
\begin{align}
    \ddt \Psi(\Phi(\bx(t)))=\dot{\Psi}(\Phi))\dot{\Phi}=\frac{\dot{\Phi}}{\psi(\Phi)}
\end{align}
but as $\dot{\Phi}(\bx(t))=-\|\nabla\Phi\|^2$, we get,
\begin{align}
    -\ddt \Psi(\Phi(\bx(t)))=\frac{\|\nabla \Phi\|^2}{c|\Phi(\bx(t))-\Phi(\bx_0)|^\mu}\geq \|\dot{\bx}\|,
\end{align}
by the Lojasiewicz inequality.
Integrating over $(t',t'')$ proves the lemma.
\end{proof}
With this lemma in hand, we are ready to prove convergence to the equilibrium.
\begin{proof}[Proof of Theorem \ref{T:main}]
As $\bx_0$ is an accumulation point, there is a sequence $t_n$ such that $\bx(t_n) \to \bx_0$. Fix an arbitrary $r<\d$ and choose a remote time $t_n$ far enough so that
\begin{align}
    \|\bx(t_n)-\bx_0\|<\frac{r}{2}, \ \ \ \int_{\Phi(\bx_0)}^{\Phi(\bx(t_n))}\psi(\zeta) d\zeta<\frac{r}{2}.
\end{align}
Now, to show that the entire trajectory for $t>t_n$ remains in $B_r(\bx_0)=\{\by:\|\bx_0-\by\|\leq r\}$, let $\tilde{t}$ be the first time such that $\|\bx(t_n+\tilde{t})-\bx_0\|=r$. Then $\bx(t)$ lies in $B_r(\bx_0)$ on $(t_n,t_n+\tilde{t})$. Then applying Lemma \ref{arc} we get,
\begin{align}
    \|\bx(t_n+\tilde{t})-\bx_0\|\leq \|\bx(t_n+\tilde{t})-\bx(t_n)\|+\|\bx(t_n)-\bx_0\|<r.
\end{align}
Therefore the trajectory must lie in $B_r(\bx_0)$ for all $t>t_n$. To conclude that $\bx_0=\bx^*$, we note that the above argument implies
\begin{align}
    \int_{t_n}^{\infty} \|\dot{\bx}(s)\|\ds<\infty.
\end{align}
Therefore $\dot{\bx}(s_n)\to 0$ and thus $\nabla \Phi(\bx(s_n))\to 0=\nabla\Phi(\bx_0)$.
\end{proof}

\section{Continuous Friedkin-Johnsen: Taylor Model}\label{s:CFJ}
The Taylor model \eqref{eq:FJc} is also representative of a gradient flow, and further its payout function is convex, so the standard convexity theory guarantees existence, uniqueness and convergence to the Nash Equilibrium. However, the proof of Theorem \ref{T:main} is powerful as it also applies as an alternate proof for the Taylor Model \eqref{eq:FJc}. Indeed, steady states of \eqref{eq:FJc} satisfy
\begin{align}\label{steady:FJ}
    \l_i\sum_{j=1}^NA_{ij}(x_j-x_i)+(1-\l_i)(u_i-x_i)=0,
\end{align}
and therefore have corresponding $F$-maps,
\begin{align}
    F(\bx)_i=\l_ix_i-\l_i\sum_{j=1}^NA_{ij}x_j+(1-\l_i)(x_i-u_i).
\end{align}
The Jacobian Matrix of this map is given by
\begin{align}
    D_{\bx}F(\bx)=I-\L A,
\end{align}
where the elements of $\L A$ are given by $\l_iA_{ij}$. By the Perron-Frobenius theory, the determinant of this Jacobian must be positive unless $\l_i=1$ for all $i$, which of course reduces the model to $\eqref{eq:EAc}$. The proof then follows exactly as for the nonlinear model, noting that the gradient structure needed for convergence can be found by the same change of variable used to transform \eqref{eq:FJ} to \eqref{eq:FJs}. If we let $y_i=\frac{x_i}{\l_i}$ and $\s_i=\frac{1-\l_i}{\l_i}$ then  \eqref{eq:FJc} becomes
\begin{align}
   \dot{y}_i= \sum_{j=1}^NB_{ij}(y_j-y_i)+\s_iu_i+(\l_i-1)y_i.
\end{align}
Then, the gradient structure can be revealed,
\begin{align}
    \ddt\by=-\nabla \Phi(\by),
\end{align}
for
\begin{align}
    \Phi(\by)=\frac{1}{4}\sum_{i,j=1}^NB_{ij}(y_i-y_j)^2+\sum_{k=1}^N (\s_ku_ky_k+ \frac{1}{2}(\l_k-1)y_k^2).
\end{align}

Therefore, the Taylor model must also converge to a unique Nash Equilibrium. In the next section, we present some numerical experiments which compare the nonlinear model with the original Friedkin-Johnsen model.

\section{Numerical experiments}\label{sec:experiments}
In this section, we present a few illustrative examples to show the different behavior of the opinion dynamics modeled by the linear FJ model and  he nonlinear model introduced here, when subject to the same initial convictions and same stubbornness parameters. 
When opinion values are close to $1$, the  nonlinear model \eqref{eq:NFJc} behaves  similarly to the linear version \eqref{eq:FJc}, as when  $x_i \sim 1$,  the linear term $u_i-x_i$  well-approximates the nonlinear one $(u_i-x_i^p)x_i$. However, we see that the nonlinear model becomes more stubborn for large opinion values than the linear version.  Indeed, the lack of translation invariance leads exactly to this behavior. The behavior represented by the nonlinear model is that agents which hold an extreme opinion, even with low stubbornness parameter $\sigma \ll 1$, require much more external pressure to change their minds, and will anyways continue to hold extreme opinions.
\begin{figure}[t]
    \centering
    \includegraphics[width=\textwidth]{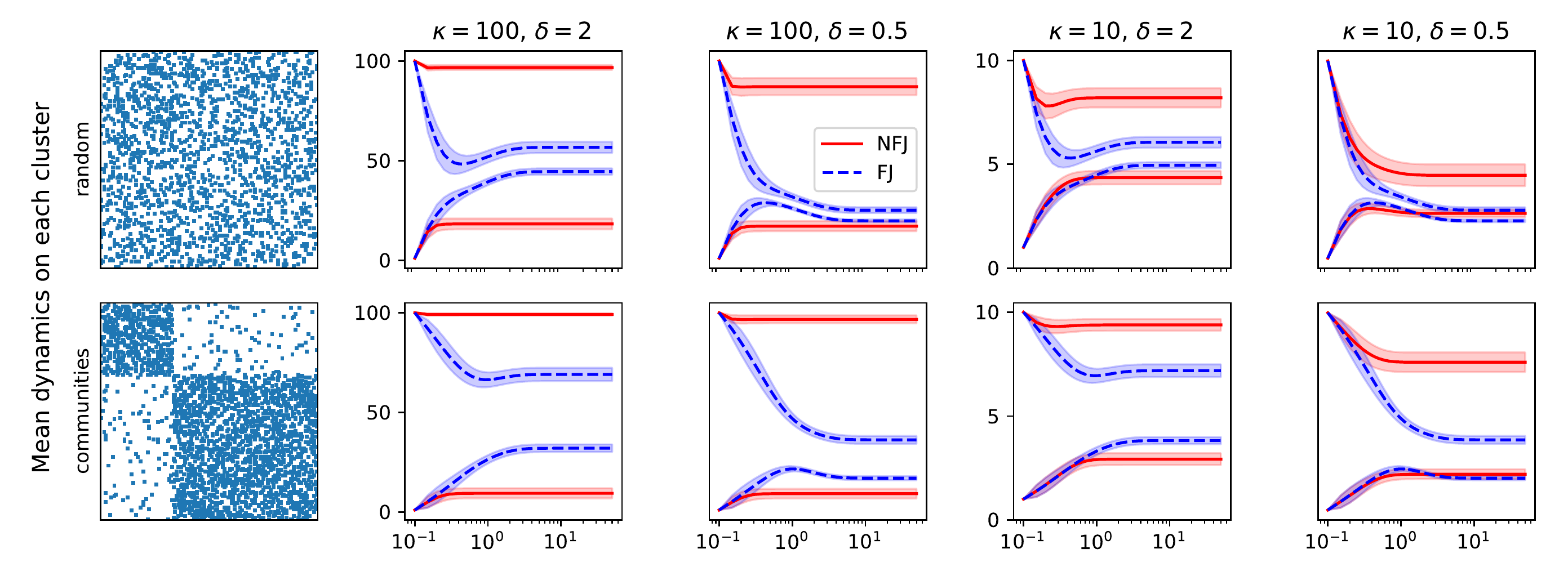}
    \caption{Adjacency matrix spy plot, and mean and standard deviation of node dynamics $x_i(t)$ in the time window $t\in [0,100]$ for the FJ  \eqref{eq:experiments_linear_model} and the NFJ \eqref{eq:experiments_nonlinear_model} models, for different choices of $u_i$ and $\sigma_i$, as defined in \eqref{eq:experiments_u_sigma}. Top row: Erdős–Rényi  random graph with 150 nodes and $p_e=0.08$ edge probability; Bottom row: SBM random graph with two blocks of 50 and 100 nodes, respectively, within-block edge probability $p_{in}=0.2$ and between-block edge probability $p_{out}=0.02$.}
    \label{fig:random_vs_communities}
\end{figure}
This is illustrated in Figure \ref{fig:random_vs_communities} where we present an example of the opinion dynamics of the two models on a small random graph and a strongly community-structured random network. The random network is an Erdős–Rényi graph with $150$ nodes and  $p_e=0.08$ edge probability. The community-structured network is a stochastic block model with two clusters of size $50$ and $100$ with within-cluster probability $p_{in}=0.2$ and across-cluster edge probability $p_{out}=0.02$.  
%
%
For a fair comparison of the choice of the parameters $\sigma_i$ and the initial convictions $u_i$, the linear model considered here is the following modified FJ or Taylor model 
\begin{equation}\label{eq:experiments_linear_model}
    \dot x_i(t) = \sum_{j=1}^N M_{ij}(x_j(t)-x_i(t))+\sigma_i(u_i-x_i(t))
\end{equation}
which we compare with the considered nonlinear dynamical system \eqref{eq:NFJc} for $p=1$
\begin{equation}\label{eq:experiments_nonlinear_model}
    \dot x_i(t) = \sum_{j=1}^N M_{ij}(x_j(t)-x_i(t))+\sigma_i(u_i-x_i(t))x_i(t)\, .
\end{equation}
For both the network configurations, the conviction and the stubbornness vectors are set to 
\begin{equation}\label{eq:experiments_u_sigma}
    u_i = \begin{cases}
\kappa & 1\leq i\leq 50\\
1       & 51\leq i\leq 150    
\end{cases}\qquad \text{and}\qquad 
\sigma_i = \begin{cases}
\delta & 1\leq i\leq 50\\
1       & 51\leq i\leq 150    
\end{cases}
\end{equation}
and we evaluate the dynamics $x_i(t)$ of each node $i$ from $t=0$ to $t=100$, for $\kappa\in\{10,100\}$ and $\delta\in\{1/2,2\}$. 

As the linear and nonlinear FJ models \eqref{eq:FJc} and \eqref{eq:NFJc}  attempt to push each opinion towards the conviction values $u_i$ and $u_i^{1/p}$, respectively, the choice of $p=1$ allows us to better appreciate the comparison between the two models with respect to the same choice of convictions. 
Although other values of $p$ would accentuate the system's nonlinear behavior for values of $x_i$ away from $1$, the choice $p=1$ introduces enough nonlinearity to significantly change the overall dynamics, and the qualitative difference between the two models does not change for different values of $p$.

\begin{figure}[t]
    \centering
    \includegraphics[width=\textwidth]{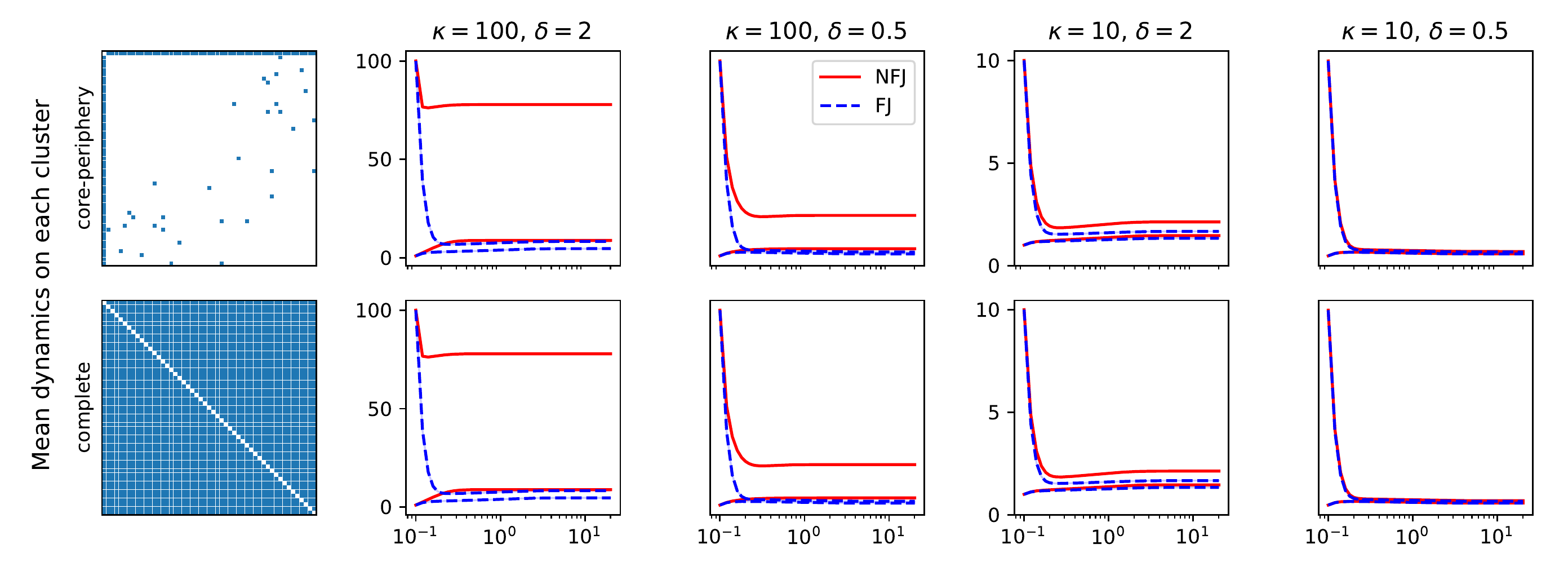}
    \caption{Adjacency matrix spy plot, and mean node dynamics $x_i(t)$ in the time window $t\in [0,100]$ for the FJ  \eqref{eq:experiments_linear_model} and the NFJ \eqref{eq:experiments_nonlinear_model} models, for different choices of $u_i$ and $\sigma_i$, as defined in \eqref{eq:experiments_u_sigma}. Top row: strongly-structured core-periphery network with one single core node $i=1$, such that $(1,j)\in\mathcal E$ for all $j\in \mathcal V$, while edges $ij$ with $i,j\neq 1 $ exist with small probability $p_e=0.01$; Bottom row: complete (loop-free) graph, with $ij\in \mathcal E$ for all $i,j\in \mathcal V$.}
    \label{fig:cp_vs_full}
\end{figure}
Figure \ref{fig:cp_vs_full} illustrates the fact that both the FJ and the NFJ models with complete all-to-all coupling can be well approximated by a core-periphery network with a  drastically reduced edge-density, where only one single node $i=1$ is connected to every other node, while edges $ij$ with $i,j\neq 1 $ exist with small probability $p_e=0.01$. Indeed, the behavior seen in the first row is identical to that in the second row, while still highlighting the differences in stubbornness between the FJ and NFJ models. Further, we see that both FJ and NFJ approach consensus in the final two columns of Figure \ref{fig:cp_vs_full} as the strong mixing of the networks overpowers the small convictions $\k=10$ and small stubbornness $\d=0.5$.
Finally, Figure~\ref{fig:real_datasets} illustrates the behavior of FJ and NFJ on two real-world social networks: `Jazz', a  network of Jazz bands,  consisting of 198 nodes, being
jazz bands, and 2742 edges representing common musicians; `CollegeMsg', a  network accounting for 13838 exchanged messages (the edges) among 1899 students (the nodes) in a north American college. In this case, similarly to what was done in \eqref{eq:experiments_u_sigma}, we defined $u$ and $\sigma$ by randomly assigning $N/2$ entries the value $1$ and the other $N/2$ the value $\kappa$ and $\delta$, respectively. As expected, the behavior of the dynamics resembles the one observed on previous synthetic sparse graphs.  

\begin{figure}
    \centering
    \includegraphics[width=\textwidth]{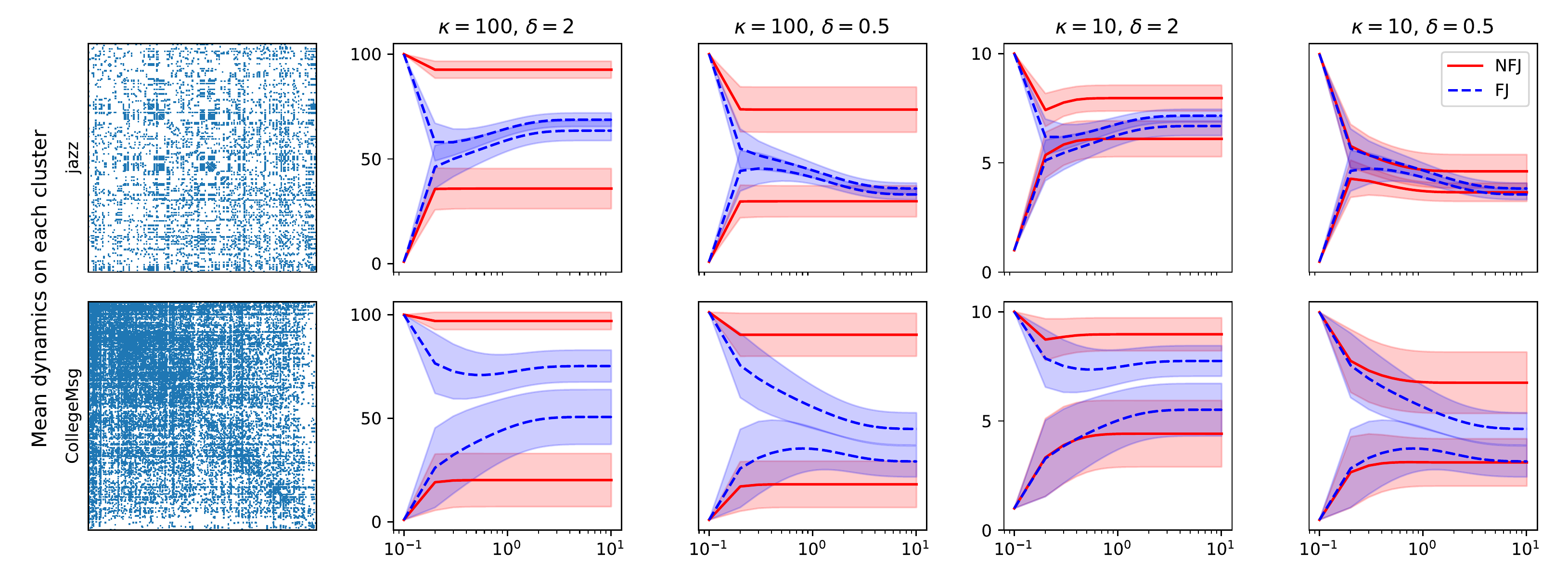}
    \caption{Dynamics on real world networks. Adjacency matrix spy plot, and mean node dynamics $x_i(t)$ in the time window $t\in [0,100]$ for the FJ  \eqref{eq:experiments_linear_model} and the NFJ \eqref{eq:experiments_nonlinear_model} models, for different choices of $u_i$ and $\sigma_i$, as defined in \eqref{eq:experiments_u_sigma}. Top row: `Jazz' network consisting of 198 nodes, jazz bands, and 2742 edges representing common musicians; Bottom row: `CollegeMsg' network containing 1899 nodes, students, and 13838 edges representing messages sent between students.}
    \label{fig:real_datasets}
\end{figure}

\section{Discussion}
In this work we introduced a nonlinear update protocol for modelling opinion dynamics over networks. The nonlinearity was inspired by the Rayleigh friction and self-propulsion, used in both flocking and oscillatory models like the Cucker-Smale and Stuart-Landau systems. We prove well-posedness and convergence of the model to a unique Nash Equilibrium. The techniques involved directly computing the Jacobian of the steady states, utilizing the Brouwer topological degree, and analysis of a nonconvex gradient flow. The techniques used for the nonlinear model carry over directly to the existing literature of linear models that are also discussed in this article. Further, it expands the nonlinear theory to more diverse network topologies as well as heterogeneous stubbornness parameters. Finally, we provide several numerical experiments to highlight the similarities and differences in outcomes that arise from utilizing nonlinear effects in the modeling of opinion dynamics. Our model effectively describes the phenomena: Extreme opinions are less easily swayed by social pressure.

The model presented here has a strong connection with models for coupled Stuart-Landau models that will be the topic of a future study. Future directions within opinion dynamics include generalizing what kind of nonlinear forcings and consensus mechanisms are permissible to lead to the same sort of Nash equilibrium.


\end{document}